\renewenvironment{abstract}
 {\small
  \begin{center}
  \bfseries \abstractname\vspace{-.5em}\vspace{0pt}
  \end{center}
  \list{}{%
    \setlength{\leftmargin}{10mm}
    \setlength{\rightmargin}{\leftmargin}%
  }%
  \item\relax}
 {\endlist}
\newtheorem{theorem}{Theorem}
\newtheorem{definition}[theorem]{Definition}
\newtheorem{proposition}[theorem]{Proposition}
\newtheorem{corollary}[theorem]{Corollary}
\newtheorem{remark}[theorem]{Remark}
\title{Computing Your Ideal Haircut Routine}
\author{Blake Pehrson}
\date{October 2024}
\begin{document}

\maketitle

\begin{abstract}
    We introduce stochastically resetting deterministic processes - the simplest subclass of general resetting stochastic processes - finding them to be repackaged renewal processes. In particular, we consider the stochastically resetting deterministic process undergoing linear growth of rate $1$ subject to Poissonian resetting and deduce its marginal, expectation, and variance. Then, using these results, we construct a stochastic model for hair growth subject to regular haircuts and use this model to prescribe the reader's ideal haircut routine.
\end{abstract}

\section{Introduction}

    \subsection{Motivation}
    I invite the reader to participate in the following: Consider a single hair follicle and the hairs that have naturally grown from it - unfettered by any haircuts. Imagine you have been observing this tiny system for a long while and have drawn a graph, plotting hair length against time. What does this graph look like? What are the graph's defining characteristics? What significant events occurred during observation, and what happened between them? You may be imagining a graph like the one shown in Figure \ref{fig:timing_process_sample_path}.

    Reduced to a base mathematical level, ignoring haircuts for the time being, a hair grows linearly, until it falls out - only to be replaced by another, repeating the linear growth of the first. The cycle continues with successive hairs falling out at varying intervals. Explicitly, between events occurring stochastically in time, we observe a repeating deterministic system. We call such a process a ``stochastically resetting deterministic process".

    \subsection{Resetting Stochastic Processes}
    
    ``Stochastically resetting deterministic processes" are perhaps the simplest class of the more general ``resetting stochastic processes", whereby a stochastic process $(S_t)_{t\in T}$ evolves freely in time until being reset to some predetermined starting point $s_0$. Once reset, the process continues to evolve freely. The cycle continues, with times between resets being independent and identically distributed. See \cite[p.36]{evans2020stochastic} for a more formal definition. Resetting stochastic processes are a relatively new discipline of study, first being studied in the late 1970s. Since then, most research regarding resetting stochastic processes has been done in the field of mathematical physics towards creating algorithms and modeling mathematical systems. See \cite{montero2017continuous} for a historical overview.

    The standard jumping off point into resetting stochastic processes is to consider Brownian motion under Poissonian resetting, which means that the process evolves as a particle undergoing diffusion with a constant chance of being reset at each infinitesimal moment \cite{gupta2022stochastic}. This process illustrates the primary characteristics of interest when researching resetting stochastic processes. Specifically, Brownian motion under Poissonian resetting permits a stationary distribution, where Brownian motion does not. Moreover, the expected time to reach a target point becomes finite upon imposing resetting \cite[p.5-6]{gupta2022stochastic}, and is minimized by an optimal resetting rate. Together, these observations - which extend beyond this one example - are of particular interest when considered in relation to search algorithms, suggesting an optimal non-zero resetting rate will improve the odds of finding a target \cite[p.2]{evans2020stochastic}.

    We will see that while stochastically resetting deterministic processes do permit stationary distributions, they do not have applications to search algorithms, as a deterministic process would either always reach a target point in a predetermined amount of time or never reach a target point. However, stochastically resetting deterministic processes have alternative use in mathematical modeling, as explained with the example of a growing hair. Other motivating examples could include the bacteria buildup on a surface which is randomly cleaned or a candy jar which is randomly refilled.

    \subsection{Overview}
    
    In this paper, we give a brief introduction to stochastically resetting deterministic processes, illustrate their key characteristics, and demonstrate their use in mathematical modeling.
    
    In section \ref{sec:theory}, we formalize what class of stochastic processes we are referring to when we say ``a stochastically resetting deterministic process". Moreover, we prove some general properties and compute the marginal of a canonical example.

    Next, using the theory we develop in section \ref{sec:theory}, we construct a stochastic model for hair growth and haircuts in section \ref{sec:application to the modeling of hair growth}. Using this model, we predict the sample mean and variance of a head of hair.

    Then, in section \ref{sec:quantifying unkempt hair} we suggest a standardized scale on which to describe the appearance of a head of hair and conclude with a novel exercise in which the reader may compute their ideal haircut routine.

    Finally, section 5 concludes with three other exercises for the reader, where this theory can be applied.
    
\section{Theory}\label{sec:theory}

    In this section we begin by constructing the most basic class of stochastically resetting deterministic processes, wherein - between resets - the process grows linearly from $0$ with rate $1$. We then compute the marginal law in the case of Poissonian resetting. Next, generalizing to a broader class of deterministic processes, we examine how the Markov property is related to these processes. Lastly, we consider the behavior of these processes continuing on from a first hitting time and compute the marginal law in our carried example.

    \subsection{Timing Processes}
    
    Before we can address a stochastically resetting deterministic process, we clarify what we mean by a deterministic process. A deterministic process in time is one whose state is exactly determined by the current point in time. Explicitly:
    
    \begin{definition}[Deterministic Process]\label{def: deterministic process}\ \\
    For a function $d:[0,\infty)\xrightarrow{}\mathbb{R}$, the process $(d(t))_{t\geq0}$ is called a \textbf{deterministic process}.
    \end{definition}

    To gain insight into stochastically resetting deterministic processes, and to formulate a method of construction, we return to the motivating example of a growing hair. Assuming that successive hairs grow at a constant rate, an individual hair's growth proceeds deterministically along some linear line starting at zero. Therefore, by knowing when hairs fall out, we can fill in the gaps between these times and describe the system in its entirety. More generally, for any repeating deterministic process, the entire process is determined precisely by the renewal times - when the process resets to some established starting point. Using this observation, we need only model the renewal times and then build the process atop them.
    
    \begin{definition}[Timing process]
    \label{def:timing process}\ \\
        Let $(X_i)_{i\in\mathbb{N}}$ be a sequence of positive i.i.d. r.v.s defined on probability space $(\Omega,\mathcal{F},\mathbb{P})$. For $n\in \mathbb{N}_0$ define r.v.s 
        \[
        Y_n=\sum_{i=1}^{n}X_i,
        \]
        where $Y_0=0$ in particular. Then, define the stochastic process $(Z_t)_{t\geq0}$ by
        \[
        Z_t=inf\{t-Y_n|n\in\mathbb{N}_0,t-Y_n\geq0\}
        \]
        noting that for all $t\geq0$ we have $t=t-0=t-Y_0$. Therefore, the prescribed set is non-empty. Moreover, by definition, the set is bounded below, so the infimum exists and is non-negative. Linear combinations and infimums preserve measurability, so the defined functions are random variables.
        Call $(Z_t)_{t\geq0}$ a \textbf{timing process} with \textbf{renewal times} $(Y_n)_{n\in\mathbb{N}_0}$ and \textbf{inter-renewal times} $(X_i)_{i\in\mathbb{N}}$. Alternatively, we may say $(Z_t)_{t\geq0}$ is the timing process induced by inter-renewal time $X$.
    \end{definition}
    \begin{remark}\label{rem:recording time}
        Observe that $(Z_t)_{t\geq0}$ is recording the time since the most recent renewal time. Therefore, the process could be interpreted as the readout of a stopwatch whose reset button is pressed at each renewal time. Hence the name ``timing process".
    \end{remark}
    \begin{remark}\label{rem:i didn't do anything special}
        As observed in the motivation for Definition \ref{def:timing process}, the information in this process is equivalent to the information contained in the sequence of i.i.d. inter-renewal times $(X_n)_{n\in\mathbb{N}}$. The reader may recognize this to be the same information upon which renewal processes are constructed (and in the case of i.i.d. exponential inter-renewal times, the Poisson process). Therefore, a timing process is essentially a repacked renewal process. Indeed, this random variable is sometimes called the lifetime of a renewal process and is related to the more well known residual lifetime.
    \end{remark}

    \begin{figure}
        \centering
        \includegraphics[width=2.66in]{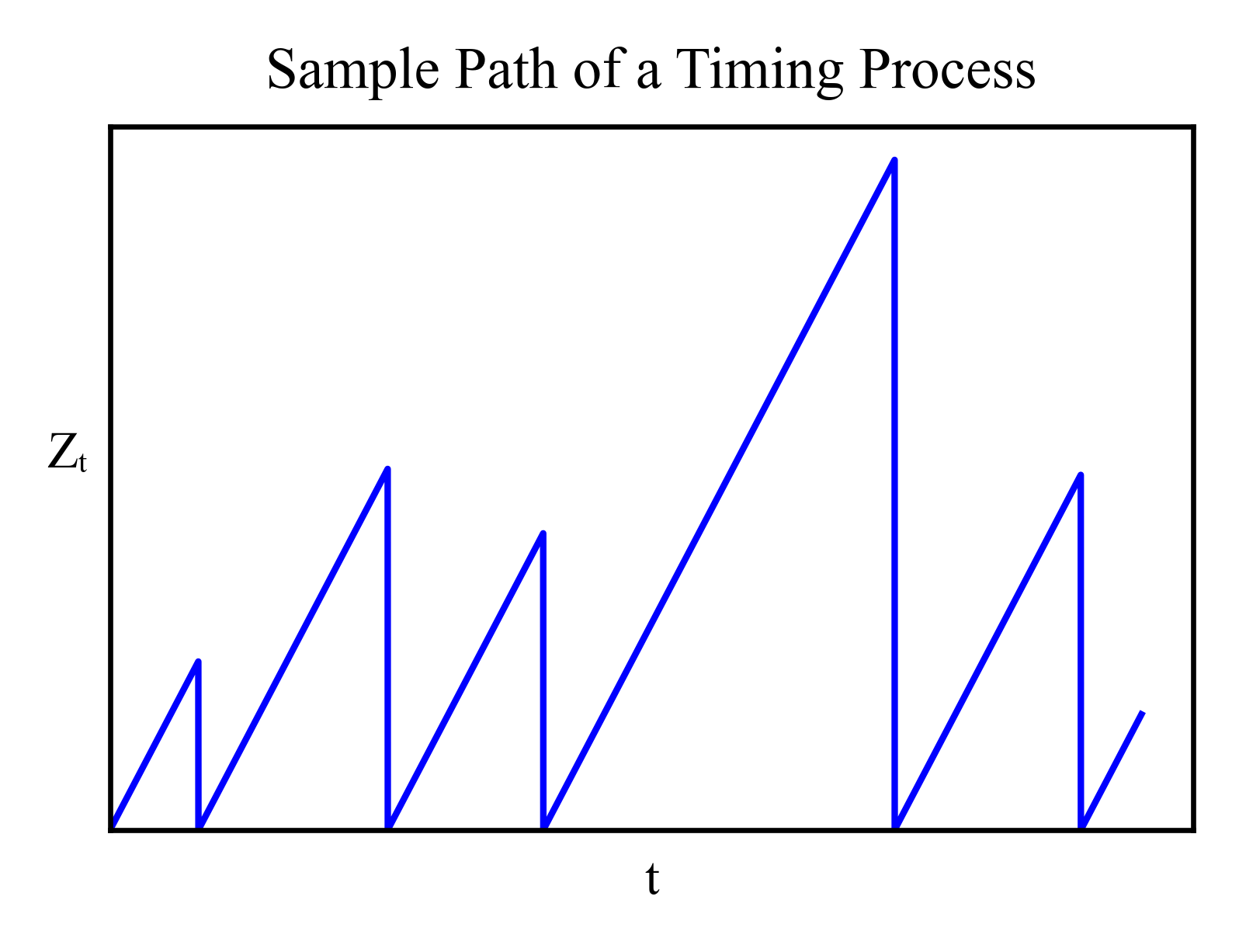}
        \caption{A sample path of a timing process with exponential inter-renewal times.}
        \label{fig:timing_process_sample_path}
    \end{figure}

    Definition \ref{def:timing process} only considers linear growth of rate 1. However, because deterministic processes are determined exactly by time, and by Remark \ref{rem:recording time}, we will later be able to adapt this process to most deterministic processes by way of functional composition. Before we generalize this idea, we consider timing processes in greater detail.

    We know present a standard result from renewal theory, considered from the perspective of resetting processes:
    
    The exponential distribution is commonly used to model random waiting times. Due to its desirable properties and relations to other distributions, we are able to explicitly compute the marginal of a timing process induced by exponential inter-renewal times below, demonstrating the primary strategies for dealing with timing processes. In the literature, this form of resetting is referred to as ``Poissonian" resetting - so named because the number of resets occurring within a fixed time frame is Poisson distributed. Considered at the infinitesimal level, Poissonian resetting occurs when the process has a constant chance of being reset at each moment.

    \begin{proposition}[Marginal of exp$(\lambda)$ timing process]
    \label{prop:marginal of exp timing process}\ \\
        Let $\lambda>0$ be given and suppose $(Z_t)_{t\geq0}$ is a timing process induced by the inter-renewal time exp$(\lambda)$. Then, for any $t\geq0$,
        \[
        \mathbb{P}[Z_t\leq z]=
        \begin{cases} 
              0 & z\in(-\infty,0) \\
              1-e^{-\lambda z} & z\in[0,t) \\
              1 & z\in[t,\infty)
        \end{cases}.
        \]
    \end{proposition}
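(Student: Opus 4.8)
The plan is to compute the distribution function of $Z_t$ directly from its definition by conditioning on the value of the last renewal time before $t$. Recall that $Z_t = \inf\{t - Y_n \mid n \in \mathbb{N}_0,\ t - Y_n \geq 0\}$ records the elapsed time since the most recent renewal. The two extreme cases are immediate: since $Z_t \geq 0$ always, we have $\mathbb{P}[Z_t \leq z] = 0$ for $z < 0$; and since $t - Y_0 = t$ is always an element of the set over which we take the infimum, we have $Z_t \leq t$ always, so $\mathbb{P}[Z_t \leq z] = 1$ for $z \geq t$. The substance of the proposition is the middle regime $z \in [0, t)$.

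For the main case, I would use the key feature of the exponential distribution: memorylessness, or equivalently the fact that the renewal process it generates is a Poisson process. The event $\{Z_t > z\}$ for $0 \le z < t$ says that there has been no renewal in the time interval $(t-z, t]$ of length $z$ — i.e., the age of the current renewal interval exceeds $z$. First I would make this translation precise: $\{Z_t > z\} = \{$no $Y_n$ lies in $(t-z, t]\}$. Then, letting $N_t$ denote the associated Poisson counting process (number of renewals in $[0,t]$) with rate $\lambda$, the number of renewals in the window $(t-z,t]$ is $N_t - N_{t-z}$, which is Poisson distributed with mean $\lambda z$ by the stationary-increments property. Hence $\mathbb{P}[Z_t > z] = \mathbb{P}[N_t - N_{t-z} = 0] = e^{-\lambda z}$, giving $\mathbb{P}[Z_t \leq z] = 1 - e^{-\lambda z}$ as claimed.

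An alternative, more self-contained route that avoids invoking the Poisson process machinery would be to condition on the index of the last renewal: $\mathbb{P}[Z_t > z] = \sum_{n=0}^\infty \mathbb{P}[Y_n \leq t - z,\ Y_{n+1} > t]$, using that the events $\{Y_n \le t-z,\ Y_{n+1} > t\}$ are disjoint across $n$ and exhaust $\{Z_t > z\}$ (for $z < t$, the trivial interval contributes the $n=0$ term via $Y_0 = 0 \le t-z$). Each $Y_n$ is a sum of $n$ i.i.d.\ $\mathrm{exp}(\lambda)$ variables, hence Gamma$(n,\lambda)$ distributed, and $X_{n+1} = Y_{n+1} - Y_n$ is independent $\mathrm{exp}(\lambda)$; writing $\mathbb{P}[Y_n \le t-z,\ X_{n+1} > t - Y_n] = \int_0^{t-z} \lambda e^{-\lambda(t-y)}\, \frac{\lambda^n y^{n-1}}{(n-1)!}e^{-\lambda y}\,dy$ for $n \ge 1$, summing the series over $n$ (the integrand sums to $\lambda e^{-\lambda(t-y)}e^{\lambda y} \cdot$ a telescoping exponential series) collapses everything to $e^{-\lambda z}$.

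I expect the main obstacle to be bookkeeping rather than any deep difficulty: carefully justifying that $\{Z_t > z\}$ decomposes exactly as the disjoint union $\bigsqcup_n \{Y_n \le t - z < t < Y_{n+1}\}$ — in particular handling the $n = 0$ boundary term and the endpoint conventions ($\le$ versus $<$) consistently with the definition of the infimum — and, in the series approach, managing the interchange of sum and integral and the index shift in the Gamma densities. If I take the Poisson-process shortcut instead, the only thing to be careful about is citing the correct property (stationary independent increments) and confirming that "no renewal in $(t-z,t]$" is genuinely equivalent to $Z_t > z$, including the case where the most recent renewal is $Y_0 = 0$ itself (which occurs precisely when $t < X_1$, and then $Z_t = t > z$ since $z < t$, consistent with no renewal in the window).
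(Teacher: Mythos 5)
Your primary route --- recasting $\{Z_t > z\}$ as the event of no renewal in the window of length $z$ ending at $t$, then invoking the stationary-increments property of the Poisson counting process $N_t$ --- is correct and genuinely different from the paper's. The paper instead conditions on the unique index $n$ with $Y_n \le t < Y_{n+1}$, rewrites the event in terms of the joint law of the independent pair $(X_{n+1}, Y_n)$ with $Y_n \sim \Gamma(n,\lambda)$, computes each term of the resulting series explicitly, and recognizes the sum as a Poisson normalization. Your shortcut outsources that entire computation to known Poisson-process machinery; this is cleaner and conceptually sharper, but it does lean on the external fact that i.i.d.\ $\mathrm{exp}(\lambda)$ inter-arrivals generate a process with stationary independent increments, whereas the paper's series argument re-derives the needed consequence from scratch. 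One minor precision point: with $Z_t = \inf\{t - Y_n : t - Y_n \ge 0\}$, the event $\{Z_t > z\}$ is ``no renewal in $[t-z,t]$'' rather than the half-open $(t-z,t]$, but since the $Y_n$ have continuous laws the two differ by a null set and your increment count is unaffected.

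Your alternative ``self-contained'' route is essentially the paper's argument, and it contains a small slip. You wrote
\[
\mathbb{P}[Y_n \le t-z,\ X_{n+1} > t - Y_n] = \int_0^{t-z} \lambda e^{-\lambda(t-y)}\,\frac{\lambda^n y^{n-1}}{(n-1)!}e^{-\lambda y}\,dy,
\]
but the factor $\lambda e^{-\lambda(t-y)}$ is the \emph{density} of $X_{n+1}$ at $t-y$, not the tail probability $\mathbb{P}[X_{n+1} > t-y] = e^{-\lambda(t-y)}$. With that extra $\lambda$ the series does not sum to $e^{-\lambda z}$. Removing it, the integral evaluates to $e^{-\lambda t}\,(\lambda(t-z))^n/n!$ for $n \ge 1$, the $n=0$ term contributes $e^{-\lambda t}$, and the series sums to $e^{-\lambda t}e^{\lambda(t-z)} = e^{-\lambda z}$ as required --- matching the paper's term-by-term computation.
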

    \begin{proof}
        Let $(Y_n)_{n\in\mathbb{N}_0}$ and $(X_i)_{i\in\mathbb{N}}$ be the renewal and inter-renewal times of $(Z_t)_{t\geq0}$ respectively.
        \\
        Let $t\geq0$ be given. By the definition of a timing process, $Z_t\geq0$. Moreover, the largest value $Z_t$ could achieve is $t$, precisely when $Y_1>t$ (the first renewal time occurs after time $t$). Therefore $0\leq Z_t\leq t$, and the first and last cases of the function are trivial. Hence, assume $z\in[0,t)$.

        Consider figure \ref{fig:timing_process_sample_path} showing a sample path of a timing process. In particular, note how between renewal times the process grows linearly from $0$. We can make stronger statements within these intervals, so our strategy for the rest of the proof will be to condition upon which pair of successive renewal times bound our time $t$.
        
        By the definition of a timing process, $(X_i)_{i\in\mathbb{N}}$ is a sequence of positive random variables. Therefore, the sequence of partial sums $(Y_n)_{n\in\mathbb{N}_0}$ is strictly increasing and unbounded almost surely (with probability $1$), and we have
        \[
        \mathbb{P}[Z_t>z]=\mathbb{P}[Z_t> z,0=Y_0<Y_1<... , (Y_n)_{n\in\mathbb{N}_0} \text{unbounded}].
        \]
        Then, there must exist a unique $n\in\mathbb{N}_0$ such that $Y_n\leq t<Y_{n+1}$. This allows us to partition
        \[
        \mathbb{P}[Z_t>z]=\sum_{n=0}^{\infty}\mathbb{P}[Z_t>z, Y_n\leq t<Y_{n+1}].
        \]
        Now, using that $z\geq0$ and the fact that for $t\in[Y_n,Y_{n+1})$, $(Z_t)_{t\geq0}$ grows linearly from $0$, we can compute $Z_t=t-Y_n$ and say that
        \[
        Z_t>z, Y_n\leq t<Y_{n+1} \iff t-Y_n>z, Y_n\leq t<Y_{n+1} \iff Y_n+z<t<Y_{n+1}.
        \]
        Recalling that $Y_{n+1}=Y_n+X_{n+1}$ we deduce that
        \[
        Y_n+z<t<Y_{n+1} \iff (X_{n+1},Y_n)\in A
        \]
        where $A=\{(x,y)\in\mathbb{R}^2|t-x<y<t-z\}$.
        Hence, substituting into our above sum, we find that
        \[
        \mathbb{P}[Z_t>z]=\sum_{n=0}^{\infty}\mathbb{P}[(X_{n+1},Y_n)\in A].
        \]
        Using the Independence of $Y_n$ and $X_{n+1}$, $X_{n+1}\sim exp(\lambda)$, and the fact that the sum $Y_n$ of $n$ i.i.d. $exp(\lambda)$ random variables is $\Gamma(n,\lambda)$ distributed, we compute
        \[
        \mathbb{P}[(X_{n+1},Y_n)\in A]=\frac{(\lambda(t-z))^n e^{-t\lambda}}{n!}.
        \]
        Finally, substituting into our sum, we see that
        \[
        \mathbb{P}[Z_t>z]=e^{-t\lambda}\sum_{n=0}^{\infty}\frac{(\lambda(t-z))^n e^{-\lambda(t-z)}}{n!}=e^{-t\lambda},
        \]
        by recognizing the terms of the sum as the PMF of a Poisson distribution with parameter $\lambda(t-z)$. Taking the probability of the complement then yields the required result.
    \end{proof}
    \begin{remark}\label{rem:mixed rv}
        This CDF describes a mixed random variable. In particular, $Z_t|Z_t<t$ is continuous and $Z_t|Z_t\geq t$ is discrete. In fact, $\mathbb{P}[Z_t=z]\neq0 \iff z=t$. At the start of the proof we noted that $Z_t=t \iff Y_1>t$. Computing this probability we indeed find that $\mathbb{P}[Z_t=t]=e^{-t\lambda}=\mathbb{P}[Y_1>t]$. Then, so long as the first renewal time has arrived before or at time $t$, we are in the realm of continuous random variables.
    \end{remark}
    \begin{remark}\label{rem:converge to X}
        Note that the CDF of $Z_t$ is the CDF of $X_1$ with a discontinuity jump to $1$ at time $t$. As $t\xrightarrow{}\infty$, this dividing line is pushed further along the $x$-axis. In particular this means that
        \[
        Z_t\xrightarrow{d}X_1 \text{ as } t\xrightarrow{}\infty.
        \]
        This does not hold true for a general timing process when $X_1$ is not exponentially distributed (Consider the case when $X_1 \sim U[1,2]$).
    \end{remark}
    \begin{remark}\label{rem:minimum charactarization}
        As in Remark \ref{rem:converge to X}, observe that the CDF of $Z_t$ is the CDF of an $exp(\lambda)$ random variable, but with a discontinuity jump at time $t$. Using this observation, it is not difficult to see that
        \[
        Z_t \sim min\{exp(\lambda),t\}.
        \]
    \end{remark}

    \begin{figure}
        \centering
        \includegraphics[width=2.6916in]{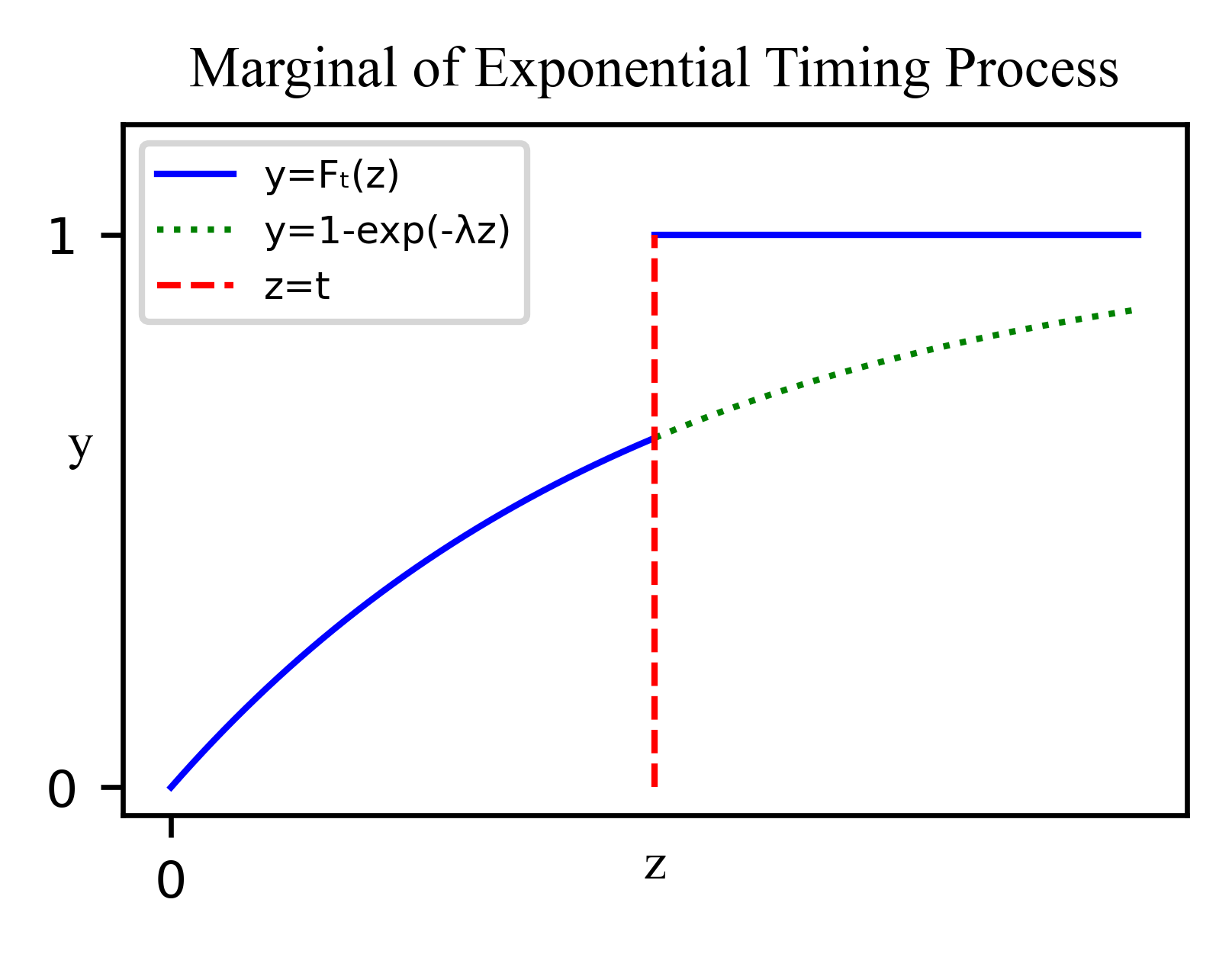}
        \caption{The CDF of $Z_2$ where $(Z_t)_{t\geq 0}$ is a timing process induced by inter-renewal time $exp(0.5)$.} 
        \label{fig:timing_process_0_marginal}
    \end{figure}

    \subsection{Resetting Processes and the Markov Property}
    
    We now generalize to a broader class of resetting deterministic processes.
    
    \begin{definition}[Resetting process]\label{def:resetting process}\ \\
        Let $(Z_t)_{t\geq0}$ be a timing process and $d:[0,\infty)\xrightarrow{}\mathbb{R}$ a Borel function. Call the stochastic process $(d(Z_t))_{t\geq0}$ a \textbf{resetting process}. We may say $(d(Z_t))_{t\geq0}$ is induced by $d$.
    \end{definition}

    Now, for a Borel function $d:[0,\infty)\xrightarrow{}\mathbb{R}$ and distribution function $F:\mathbb{R}\xrightarrow{}[0,1]$ with positive support, we can construct a stochastically resetting deterministic process with maximal sample path $d$ and i.i.d. inter-renewal times whose individual laws are given by $F$. Figure \ref{fig:sin_resetting_process_sample_path} shows a sample path of a resetting process induced by the function $sin:[0,\infty)\xrightarrow{}\mathbb{R}$.
    
    \begin{figure}
        \centering
        \includegraphics[width=2.94in]{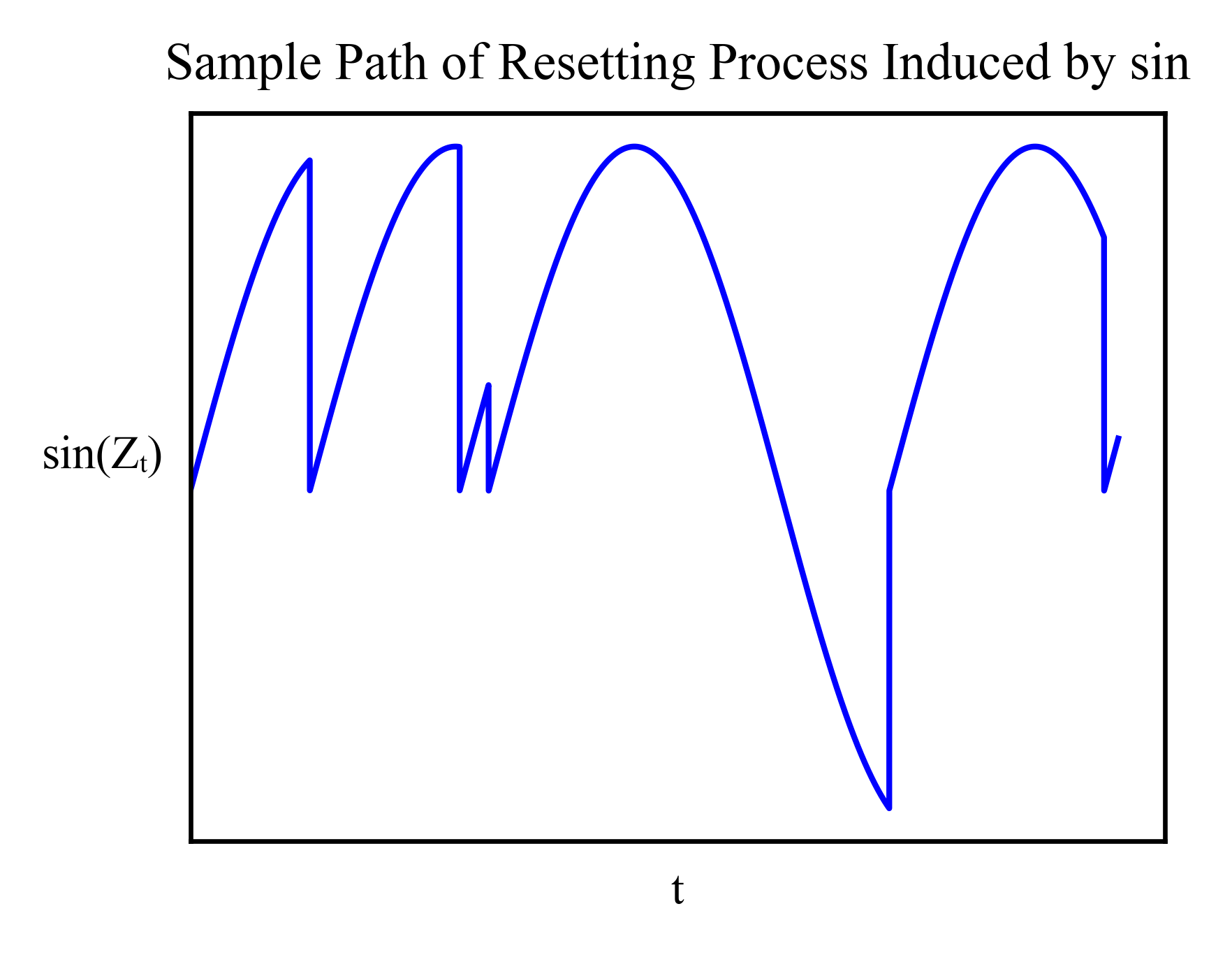}
        \caption{A sample path of a resetting process with exponential inter-renewal times and induced by the function $sin:[0,\infty)\xrightarrow{}\mathbb{R}$.}
        \label{fig:sin_resetting_process_sample_path}
    \end{figure}

    We now consider what we can predict about a stochastically resetting deterministic process by observing its current state. We will see that the Markov property is interlinked with this idea.
    
    Given the sample path of a resetting process up to some present state, the immediate future is almost surely deterministic. A stronger condition would impose that given only the present, the immediate future is almost surely deterministic. In other words, the best prediction of the future depends only on the present state, which is exactly the Markov property. In the following proposition and corollary we explain a sufficient condition for a resetting process to be Markov.
    
    \begin{proposition}[Timing processes are Markov]
    \label{prop:timing processes are markov}\ \\
    Let $(Z_t)_{t\geq0}$ be a timing process. Then $(Z_t)_{t\geq0}$ is Markov.
    \end{proposition}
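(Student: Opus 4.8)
The plan is to establish the Markov property for the natural filtration $\mathcal{F}^Z_s := \sigma(Z_u : 0 \le u \le s)$; by a routine monotone-class argument this is equivalent to the finite-dimensional statement $\mathbb{P}[Z_t \in B \mid Z_{s_1},\dots,Z_{s_k}] = \mathbb{P}[Z_t \in B \mid Z_{s_k}]$, so it is enough to show that for all $0 \le s < t$ and every bounded Borel $f$ the conditional expectation $\mathbb{E}[f(Z_t)\mid\mathcal{F}^Z_s]$ is $\sigma(Z_s)$-measurable. The first step is to understand $\mathcal{F}^Z_s$. On $[0,s]$ the sample path $u\mapsto Z_u$ is piecewise linear of slope $1$ with downward jumps to $0$ precisely at the renewal times $Y_1 < \dots < Y_{N(s)}$ that lie in $[0,s]$, where $N(s) := \max\{n\in\mathbb{N}_0 : Y_n \le s\}$ (a finite maximum almost surely, since $(Y_n)$ is unbounded). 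Thus the path on $[0,s]$ and the list $\big(N(s), X_1,\dots,X_{N(s)}\big)$ determine one another, so $\mathcal{F}^Z_s = \sigma\big(N(s), X_1,\dots,X_{N(s)}\big)$, with $Z_s = s - Y_{N(s)}$ a further measurable function. In particular, on $\{N(s)=n\}$ one has $\{N(s)=n\} = \{Y_n\le s\}\cap\{X_{n+1} > Z_s\}$, and every $A\in\mathcal{F}^Z_s$ can be written as $A\cap\{N(s)=n\} = \{(X_1,\dots,X_n)\in C_n\}\cap\{Y_n\le s\}\cap\{X_{n+1}>Z_s\}$ for a suitable Borel $C_n\subseteq\mathbb{R}^n$.

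The second step is a ``re-rooting'' identity for the future. Fix $n$ and work on $\{N(s)=n\}$; write $W := Z_s = s - Y_n$ and set $\widehat{X}_i := X_{n+i}$, $\widehat{Y}_k := \sum_{i=1}^{k}\widehat{X}_i$, $\widehat{N}(r) := \max\{k\in\mathbb{N}_0 : \widehat{Y}_k \le r\}$. Since every renewal time beyond $Y_n$ equals $Y_n + \widehat{Y}_k$ for some $k$, one gets $Y_{N(s+u)} = Y_n + \widehat{Y}_{\widehat{N}(u+W)}$ and hence, for all $u \ge 0$,
\[
Z_{s+u} \;=\; (u+W) - \widehat{Y}_{\widehat{N}(u+W)}\;.
\]
The essential observations are: (i) the right-hand side is a single fixed Borel function $\Psi\big(u, W, (\widehat{X}_i)_{i\ge1}\big)$ of $u$, $W$, and the tail sequence alone — the index $n$ has vanished; and (ii) the tail $(\widehat{X}_i)_{i\ge1} = (X_{n+i})_{i\ge1}$ is, for every $n$, an i.i.d. sequence with the common law of $X_1$ that is independent of $(X_1,\dots,X_n)$. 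Said differently, given the present, the continuation of $(Z_t)_{t\ge0}$ is again a timing process, merely started from ``clock value'' $W$ and with its first inter-renewal time conditioned to exceed $W$ — which is the classical fact that the backward recurrence time of a renewal process is Markov, rephrased via Remark \ref{rem:i didn't do anything special}.

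For the final step I would test against an arbitrary $A\in\mathcal{F}^Z_s$, which sidesteps conditioning on the measure-zero configuration of the individual $X_i$'s. Using the decomposition of $A\cap\{N(s)=n\}$, the identity $Z_t = \Psi\big(t-s, Z_s, (X_{n+i})_{i\ge1}\big)$, and then conditioning on $(X_1,\dots,X_n)$ and invoking the independence in (ii), each term becomes
\[
\mathbb{E}\big[f(Z_t)\,\mathbf{1}_{A\cap\{N(s)=n\}}\big] \;=\; \mathbb{E}\big[\mathbf{1}_{\{(X_1,\dots,X_n)\in C_n,\,Y_n\le s\}}\;h(s-Y_n)\big],
\]
where $h(w) := \mathbb{E}\big[f\big(\Psi(t-s, w, (X_i)_{i\ge1})\big)\,\mathbf{1}_{\{X_1>w\}}\big]$ — and $h$ does not depend on $n$, precisely because the tail sequence has the same law whatever $n$ is. Writing $h(w) = \widetilde{h}(w)\,\mathbb{P}[X_1>w]$ (legitimate because $Z_s < X_{N(s)+1} \le \operatorname{ess\,sup}(X_1)$ almost surely, so $\mathbb{P}[X_1 > Z_s] > 0$ a.s.), replacing $h(s-Y_n)$ accordingly, and summing over $n$ yields $\mathbb{E}[f(Z_t)\mathbf{1}_A] = \mathbb{E}[\widetilde{h}(Z_s)\mathbf{1}_A]$ for every $A\in\mathcal{F}^Z_s$; since $\widetilde{h}(Z_s)$ is $\sigma(Z_s)$-measurable, this says $\mathbb{E}[f(Z_t)\mid\mathcal{F}^Z_s] = \widetilde{h}(Z_s) = \mathbb{E}[f(Z_t)\mid Z_s]$, which is the Markov property. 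I expect the main obstacle to be the re-rooting identity of Step 2 together with the bookkeeping that makes the dependence on $n$ cancel in Step 3; once the future is expressed through the single map $\Psi$ and the tail sequence, what remains is the independence in (ii) and the memorylessness-type decomposition $\{N(s)=n\} = \{Y_n\le s\}\cap\{X_{n+1}>Z_s\}$.
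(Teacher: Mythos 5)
Your proof follows the same structural decomposition as the paper's — given $Z_s$, the past of the timing process is determined by the initial block $(X_i)_{i\leq N(s)}$ of inter-renewal times and the future by the tail $(X_i)_{i>N(s)}$ — but you carry it out with a level of measure-theoretic care the paper's short argument does not attempt. In particular, your identification $\{N(s)=n\}=\{Y_n\le s\}\cap\{X_{n+1}>Z_s\}$ together with the $h/\widetilde h$ factorization is exactly what is needed to handle the coupling between the present $\sigma$-algebra and $X_{N(s)+1}$ that arises because $N(s)$ is random; the paper's claim that $(X_i)_{i<m}$ and $(X_i)_{i>m}$ are ``independent by the definition of a timing process'' quietly sweeps this dependence under the rug.
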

    \begin{proof}
        Let $(Z_t)_{t\geq0}$ have renewal times $(Y_n)_{n\in\mathbb{N}_0}$ and inter-renewal times $(X_i)_{i\in\mathbb{N}}$. Further assume $ s \geq0$.
        
        At time $s$, we can determine the most recent renewal time $Y_m= s -Z_ s $ so that $Y_m\leq s <Y_{m+1}$. Then observe that given $Z_ s $, the past $(Z_t)_{t< s }$ is determined precisely by $(Y_n)_{n=0}^{m-1}$ and hence by $(X_i)_{i=1}^{m-1}$.

        Next, observe that the future $(Z_t)_{t>s}$ is determined by $(Y_n)_{m\geq n}$. For $n\in\mathbb{N}$, we have that $Y_{m+n}=Y_m+\sum_{i=m+1}^{m+n}X_{i}$, and so for $t\geq  s $, $Z_t$ is determined precisely by $Y_m$ and $(X_i)_{i\geq m+1}$. But $Y_m$ is determined by $Z_ s $. Therefore, given $Z_ s $, the future $(Z_t)_{t> s }$ is determined by $(X_i)_{i\geq m+1}$.

        But $(X_i)_{i=1}^{m-1}$ and $(X_i)_{i\geq m+1}$ are independent by the definition of a timing process. Therefore, given the present, the future is independent of the past. This is equivalent to the Markov property.
    \end{proof}
    \begin{remark}
        In Remark \ref{rem:i didn't do anything special}, we explained that a timing process is a repackaged renewal process. It is interesting to note that renewal processes do not have the Markov property in general, owing to the inability to determine the most recent renewal time given the process's current state, arising as a result of a renewal process being constant between renewal times. However, provided that the renewal process has i.i.d exponential inter-arrival times, a renewal process reduces to a Poisson process which is Markov due to the memoryless property of exponential random variables.

        Similarly, stochastically resetting non-deterministic processes do not have the Markov property in general unless they are induced by i.i.d. exponential inter-renewal times. 
    \end{remark}

    \begin{corollary}[Resetting process induced by a given injective Borel functions is Markov]\label{cor:condition for markov}\ \\
    Let $(d(Z_t))_{t\geq0}$ be a resetting process induced by a given Borel function $d:[0,\infty)\xrightarrow{}\mathbb{R}$ and inter-renewal time $X$. If $d$ is injective then $(d(Z_t))_{t\geq0}$ is Markov.

    \end{corollary}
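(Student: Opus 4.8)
The plan is to show that injectivity of $d$ makes the resetting process $(d(Z_t))_{t\ge 0}$ carry exactly the same information as the underlying timing process $(Z_t)_{t\ge 0}$, and then to transport the Markov property established in Proposition \ref{prop:timing processes are markov} across this equivalence. The point is that passing to $(d(Z_t))$ is an invertible relabelling of states, and the Markov property is preserved under invertible (measurable) relabellings.

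First I would fix $s\ge 0$ and recall that the Markov property at time $s$ says that, conditionally on $\sigma(d(Z_s))$, the $\sigma$-algebras generated by $\{d(Z_t): t\le s\}$ and by $\{d(Z_t): t\ge s\}$ are independent. Since $d$ is a function, the inclusion $\sigma(d(Z_t): t\in I)\subseteq\sigma(Z_t:t\in I)$ holds automatically for any index set $I$. The substance is the reverse inclusion: because $d$ is injective it admits a left inverse $d^{-1}:d([0,\infty))\to[0,\infty)$ with $Z_t=d^{-1}(d(Z_t))$ pointwise, so $Z_t$ is $\sigma(d(Z_t))$-measurable \emph{provided $d^{-1}$ is Borel}. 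Granting that, $\sigma(d(Z_t):t\in I)=\sigma(Z_t:t\in I)$ for every $I$; in particular the ``past", ``present", and ``future" $\sigma$-algebras for $(d(Z_t))$ coincide with those for $(Z_t)$, and the Markov property for $(d(Z_t))$ follows verbatim from that of $(Z_t)$. Equivalently, for $t>s$ and Borel $B$ one checks $\mathbb{P}[d(Z_t)\in B\mid\mathcal{F}^{d(Z)}_s]=\mathbb{P}[Z_t\in d^{-1}(B)\mid\mathcal{F}^{Z}_s]=\mathbb{P}[Z_t\in d^{-1}(B)\mid Z_s]=\mathbb{P}[d(Z_t)\in B\mid d(Z_s)]$.

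The main obstacle is precisely the measurability of $d^{-1}$: a general injective Borel function need not have a Borel inverse, but here the domain $[0,\infty)$ and codomain $\mathbb{R}$ are Polish spaces, so the Lusin--Souslin theorem applies --- a Borel injection between Polish spaces sends Borel sets to Borel sets, so $d([0,\infty))$ is Borel and $d^{-1}$ is Borel on it. I would either invoke this theorem directly, or, if a self-contained treatment is preferred, note that in every case of interest $d$ is continuous and strictly monotone (as with the rate-$1$ linear growth of our running example), whence $d^{-1}$ is continuous and the issue disappears. Once $d^{-1}$ is available the remainder is just the bookkeeping of equating the relevant $\sigma$-algebras, which is routine.
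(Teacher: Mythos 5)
Your proof is correct and rests on the same key observation the paper uses — injectivity of $d$ lets you recover $Z_s$ (and hence the most recent renewal time $Y_m = s - Z_s$) from $d(Z_s)$, after which the argument of Proposition~\ref{prop:timing processes are markov} transfers. The difference is one of packaging and rigor: the paper's proof is essentially a pointer (``similar to Proposition~\ref{prop:timing processes are markov}, with injectivity supplying $Y_m$'') plus a counterexample motivating why injectivity is needed, whereas you formulate the transfer as a general fact — the Markov property is invariant under an invertible measurable relabelling of the state space — and verify the one nontrivial technical hypothesis, namely that the left inverse $d^{-1}$ is Borel. That measurability issue is real and the paper silently skips it; your appeal to the Lusin--Souslin theorem (an injective Borel map between standard Borel spaces has Borel image and Borel inverse, and $[0,\infty)$ and $\mathbb{R}$ are Polish) closes it cleanly. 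In short: same underlying idea, but your write-up is the more complete argument, making explicit both the $\sigma$-algebra identification $\sigma(d(Z_t):t\in I)=\sigma(Z_t:t\in I)$ and the descriptive-set-theoretic fact that makes it legitimate.
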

    \begin{proof}
        The proof is similar to that of Proposition \ref{prop:timing processes are markov}, excepting the fact that injectivity is sufficient to determine the most recent renewal time $Y_m$. To illustrate this, consider the following counter-example: take a resetting process induced by the $sin$ function (as shown in Figure \ref{fig:sin_resetting_process_sample_path}) and suppose are given $sin(Z_ s )=0$. By the periodicity of $sin$, the most recent renewal time could be any multiple of $\pi$. Given the past and present, the immediate future is deterministic, but with only the present, the process could increase or decrease.
    \end{proof}

    \subsection{Observing Ongoing Processes}

     With a better understanding of which resetting processes are Markov, we ask about the continued evolution of a process after hitting some target point.

    \begin{definition}[Timing process started at $\alpha\geq0$]
    \label{def:timing starting at alpha}\ \\
        Let $(Z_t)_{t\geq0}$ be a timing process induced by inter-renewal time $X$. For any $\alpha\geq0$ which is not an upper bound on the range of $X$, consider the first hitting time 
        \[
        \tau_\alpha=inf\{t\geq0|Z_t=\alpha\}
        \]
        and define the stochastic process 
        \[
        (Z_{t}^{\alpha})_{t\geq0}=(Z_{\tau_\alpha +t})_{t\geq0}.
        \]
        Call $(Z_{t}^{\alpha})_{t\geq0}$ a \textbf{timing process started at $\alpha$}.
    \end{definition}
    \begin{remark}\label{rem:natural behavior}
        With similar reasoning to what was shown in the beginning of the proof of Proposition \ref{prop:marginal of exp timing process}, if the range of inter-renewal time $X$ were bounded above by some $L>0$, then the timing process would almost surely be reset before reaching that point. Therefore, under natural behavior, a timing process can only take values which are not upper bounds on the range of its inter-renewal times.
    \end{remark}
    \begin{remark}\label{rem:immediate future markov}
        This definition is relevant precisely by the Markov property proven in Proposition \ref{prop:timing processes are markov}. One can extend this definition to all resetting processes, but without the Markov property, the immediate future at time $t=0$ would not necessarily be deterministic, which is what we are trying to examine.
    \end{remark}

    In many common stochastic processes such as Brownian motion, random walks, or Poisson processes, we have the notion of ``a process started at $\alpha$", where we ask about the continued behavior of the process after hitting $\alpha$. If the stochastic process $(S_t)_{t\in T}$ is one of the processes mentioned above, we may simply define $(S_t+\alpha)_{t\in T}$ to be the process started at $\alpha$. This follows from observing that the mentioned processes are all Lévy processes, meaning they have stationary and independent increments - less mathematically, they have no fixed reference point. If we took the same approach when considering a timing process started at $\alpha$, the process would no longer be reset to $0$, but to $\alpha$. Hence, we would not be describing the continued behavior of a timing process started at $\alpha$. Further analysis is required.
    
    We continue on from our example in Proposition \ref{prop:marginal of exp timing process}, deducing the marginal of the given timing process started at some $\alpha\geq0$.
    
    \begin{theorem}[Marginal of exp$(\lambda)$ timing process started at $\alpha\geq0$]
    \label{the:marginal of exp timing process started at a}\ \\
        Let $\lambda>0$ and $\alpha\geq0$ be given. Suppose $(Z_{t}^{\alpha})_{t\geq0}$ is a timing process induced by the inter-renewal time exp$(\lambda)$ and started at $\alpha$. Then, for any $t\geq 0$,
        \[
        \mathbb{P}[Z_{t}^{\alpha}\leq z]=
        \begin{cases} 
              0 & z\in(-\infty,0) \\
              1-e^{-\lambda z} & z\in[0,t) \\
              1-e^{-\lambda t} & z\in[t,t+\alpha) \\
              1 & z\in[t+\alpha,\infty) 
        \end{cases}.
        \]
    \end{theorem}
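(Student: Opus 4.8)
The plan is to use the memorylessness of $\exp(\lambda)$ to identify $(Z^\alpha_t)_{t\ge 0}$ with an explicitly describable process, and then to read off its marginal by conditioning on the first renewal after $\tau_\alpha$. Write $(Y_n)_{n\in\mathbb N_0}$ and $(X_i)_{i\in\mathbb N}$ for the renewal and inter-renewal times of $(Z_t)_{t\ge 0}$, and set $N=\min\{n\in\mathbb N_0 : X_{n+1}>\alpha\}$, which is finite almost surely since $\mathbb P[X_1>\alpha]=e^{-\lambda\alpha}>0$. Exactly as in the opening of the proof of Proposition~\ref{prop:marginal of exp timing process}, during each of the intervals $[Y_0,Y_1),\dots,[Y_{N-1},Y_N)$ the process resets before it can reach $\alpha$ (their lengths $X_1,\dots,X_N$ do not exceed $\alpha$), while on $[Y_N,Y_{N+1})$ it climbs past $\alpha$; hence $\tau_\alpha=Y_N+\alpha$. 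On the shifted time scale the process therefore grows linearly from $\alpha$ until the next renewal, which occurs after the residual time $W:=X_{N+1}-\alpha$, and thereafter restarts from $0$ as a timing process with inter-renewal times $X_{N+2},X_{N+3},\dots$. In symbols,
\[
Z^\alpha_t=\begin{cases}\alpha+t,& t<W,\\ \widehat Z_{\,t-W},& t\ge W,\end{cases}
\]
where $(\widehat Z_s)_{s\ge 0}$ denotes a timing process induced by $\exp(\lambda)$.

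The main obstacle is to justify that $W\sim\exp(\lambda)$ and that $W$ is independent of $(\widehat Z_s)_{s\ge 0}$. I would condition on $\{N=n\}=\{X_1\le\alpha,\dots,X_n\le\alpha,\;X_{n+1}>\alpha\}$. Because the $X_i$ are i.i.d.\ and $X_{n+1}$ is memoryless, conditionally on this event $W=X_{n+1}-\alpha$ is again $\exp(\lambda)$ and is independent of $X_1,\dots,X_n$ and of $X_{n+2},X_{n+3},\dots$. Since the conditional law does not depend on $n$, averaging over $n$ yields that $W\sim\exp(\lambda)$ unconditionally and that $W$ is independent of the tail sequence $(X_{N+2},X_{N+3},\dots)$, which is itself i.i.d.\ $\exp(\lambda)$; hence $W\perp(\widehat Z_s)_{s\ge 0}$ and $\widehat Z$ really is an $\exp(\lambda)$ timing process, so Proposition~\ref{prop:marginal of exp timing process} applies to it. (Equivalently: $N$ is a stopping time for the natural filtration of $(X_i)$, so this is the strong Markov property for an i.i.d.\ sequence together with memorylessness.)

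With these facts in hand the computation is routine; I would work with $\mathbb P[Z^\alpha_t>z]$ and split on $\{W>t\}$ versus $\{W\le t\}$. Since $0\le Z^\alpha_t\le\alpha+t$ always, with the value $\alpha+t$ attained precisely on $\{W>t\}$, the cases $z<0$ and $z\ge t+\alpha$ are immediate. For $0\le z<t$, on $\{W>t\}$ we have $Z^\alpha_t=\alpha+t>z$, and conditioning on $W=w\le t$ and using $\mathbb P[\widehat Z_s>z]=e^{-\lambda z}\mathbf{1}_{\{z<s\}}$ (Proposition~\ref{prop:marginal of exp timing process}) gives
\[
\mathbb P[Z^\alpha_t>z]=e^{-\lambda t}+\int_0^{t-z}e^{-\lambda z}\,\lambda e^{-\lambda w}\,dw=e^{-\lambda z}.
\]
For $t\le z<t+\alpha$, on $\{W\le t\}$ we have $Z^\alpha_t=\widehat Z_{t-W}\le t-W<t\le z$ almost surely, so only $\{W>t\}$ contributes and $\mathbb P[Z^\alpha_t>z]=\mathbb P[W>t]=e^{-\lambda t}$. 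Taking complements in each of the four ranges produces the stated distribution function; note that when $\alpha=0$ the third case is empty and the formula collapses to that of Proposition~\ref{prop:marginal of exp timing process}, as it must since then $\tau_0=0$ and $Z^0_t=Z_t$.
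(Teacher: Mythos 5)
Your proposal is correct and follows essentially the same route as the paper: use memorylessness to show that the residual time to the first renewal after $\tau_\alpha$ is again $\exp(\lambda)$, realize $Z^\alpha_t$ as $\alpha+t$ before that renewal and as a fresh timing process afterward, then condition on that first renewal time. You are somewhat more careful than the paper's proof in treating the random index $N$ as a stopping time and in explicitly carrying out the final integral, which the paper leaves as ``we need only condition on $Y_1$.''
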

    \begin{proof}
        Before we begin, let us review the memoryless property of exponential random variables:

            Let $X\sim exp(\lambda)$ be a random variable defined on probability space $(\Omega,\mathcal{F},\mathbb{P})$. Then, for all $\beta,x\geq0$ we have
            \[
            \mathbb{P}[X>x+\beta|X>\beta]=\mathbb{P}[X>x]
            \]
            So that $(X|X>\beta) \sim X+\beta$.
            
        We continue with the proof. Let $(Z_t)_{t\geq0}$ be a timing process with i.i.d. $exp(\lambda)$ inter-renewal times $(X_i)_{i\in\mathbb{N}}$ and renewal times $(Y_n)_{n\in\mathbb{N}_0}$. Consider the first hitting time $\tau_\alpha \geq 0$. Because $(Y_n)_{n\in\mathbb{N}_0}$ is almost surely unbounded and strictly increasing with $Y_0=0$, we can bound $\tau_\alpha$ within a unique pair of renewal times $Y_m\leq \tau_\alpha <Y_{m+1}$. By the definition of $\tau_\alpha$, we have that $Z_{\tau_\alpha}=\alpha$. Then, because timing processes grow linearly from $0$ between successive renewal times, we have that $\tau_\alpha=Y_m+\alpha$. Therefore, we deduce that
        \[
        X_{m+1}=Y_{m+1}-Y_m>\alpha.
        \]
        By assumption, $X_{m+1} \sim exp(\lambda)$. Then, applying the memoryless property of exponential random variables and substituting for $Y_{m+1}$, we see that
        \[
        Y_{m+1}-\tau_\alpha \sim exp(\lambda).
        \]
        Then, by the independence of inter-renewal times, for $n\in\mathbb{N}$, $Y_{m+n}-\tau_\alpha$ is a sum of $n$ i.i.d. $exp(\lambda)$ random variables. Recognizing this to be identical to the construction of the original renewal times, we see that
        \[
        (Y_{m+n}-\tau_\alpha)_{n\in\mathbb{N}} \sim (Y_n)_{n\in\mathbb{N}}.
        \]
        Finally, by considering the values $(Z_t)_{t\geq 0}$ must take between these renewal times, the following construction produces an identically distributed random variable:
        \[
        Z_{t}^{\alpha}=
        \begin{cases} 
            \alpha+Z_t & t<Y_1 \\
            Z_t & Y_1\leq t \\
       \end{cases},
        \]
        where we simply shift the timing process up by $\alpha$ until we reach the first renewal time $Y_1$. From this construction, we need only condition on $Y_1$ to deduce the claimed marginal.
    \end{proof}
    \begin{remark}\label{rem:mixed2}
        Following on from Remarks \ref{rem:mixed rv} and \ref{rem:converge to X}, this marginal also describes mixed random variables. In particular, $Z_{t}^{\alpha}$ is discrete only for $Z_{t}^{\alpha}=t+\alpha$, which occurs when $Y_1>t$. Moreover, the starting state of a timing process does not change its limiting behavior, and we retain
        \[
        Z_{t}^{\alpha}\xrightarrow{d}exp(\lambda) \text{ as } t\xrightarrow{}\infty.
        \]
        It is also easy to verify that $exp(\lambda)$ is the stationary distribution of a timing process in the sense that if $X \sim exp(\lambda)$, then for all $t\geq0$
        \[
        Z_{t}^{X} \sim X.
        \]
    \end{remark}

    \begin{figure}
        \centering
        \includegraphics[width=2.8483in]{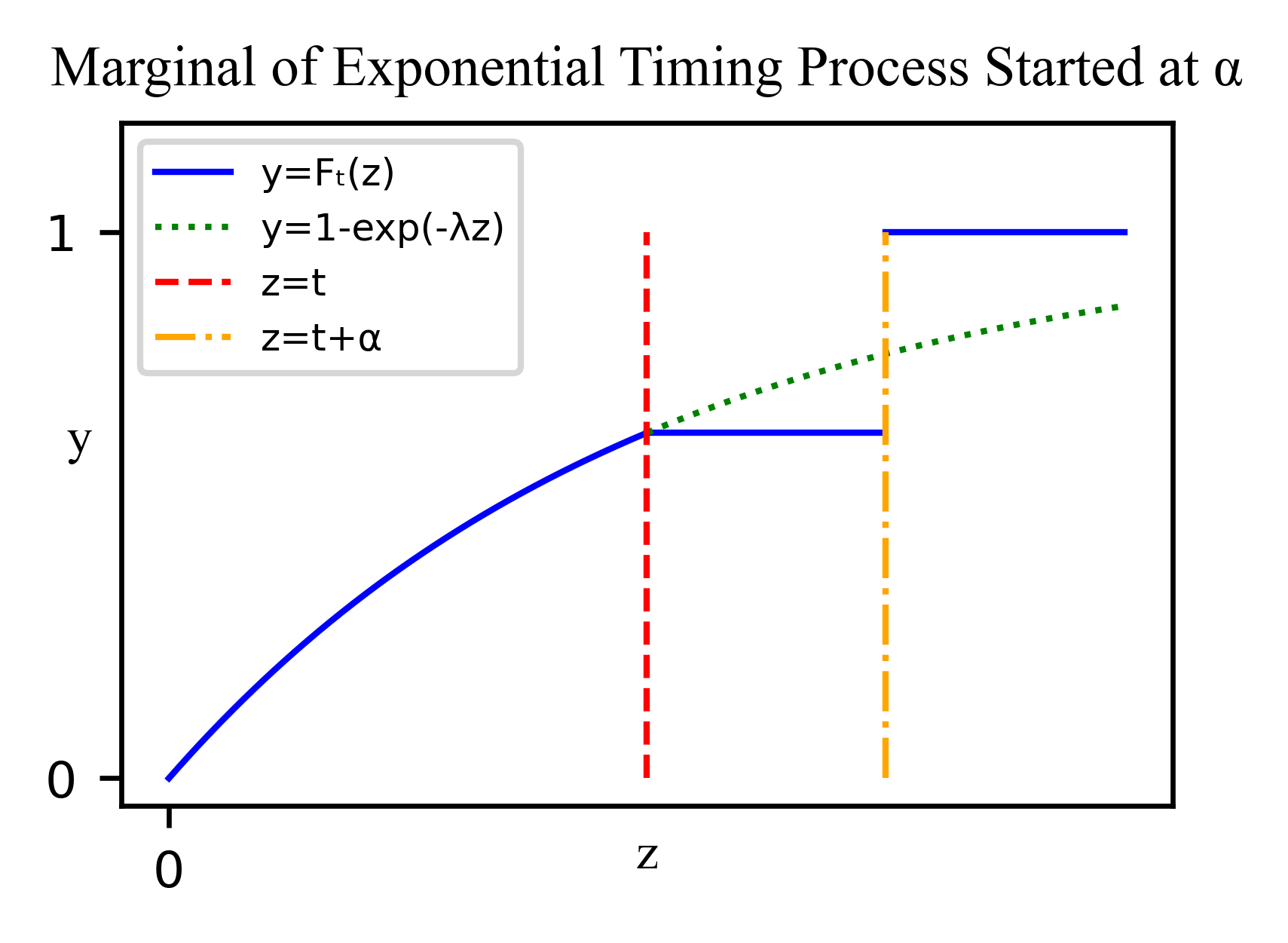}
        \caption{The CDF of $Z_{2}^{1}$ where $(Z_{t}^{1})_{t\geq0}$ is a timing process induced by inter-renewal time $exp(\frac{1}{2})$, and started at $1$.}
        \label{fig:enter-label}
    \end{figure}

    \begin{corollary}[Expectation and variance of exp$(\lambda)$ timing process started at $\alpha$]\label{cor:expectation and variance}\ \\
        Under the assumptions of Theorem \ref{the:marginal of exp timing process started at a}, we have that for all $t\geq0$,
        \[
        \mathbb{E}(Z_{t}^{\alpha})=\frac{1}{\lambda}(1-e^{-\lambda t}) +\alpha e^{-\lambda t}
        \]
        and
        \[
        Var(Z_{t}^{\alpha})=\frac{1}{\lambda^2}(1+e^{-t\lambda}(\alpha^2\lambda^2+2t\alpha\lambda^2-2\alpha\lambda-2t\lambda)+e^{-2t\lambda}(2\alpha\lambda-1-\alpha^2\lambda^2))
        \]
    \end{corollary}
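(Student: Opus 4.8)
The plan is to read off the law of $Z_t^\alpha$ from the CDF in Theorem~\ref{the:marginal of exp timing process started at a} and then integrate. That CDF is continuous and strictly increasing on $[0,t)$ with derivative $\lambda e^{-\lambda z}$, is flat on $[t,t+\alpha)$, and jumps from $1-e^{-\lambda t}$ up to $1$ at $z=t+\alpha$. Hence $Z_t^\alpha$ is a mixed random variable whose law is $\lambda e^{-\lambda z}\,dz$ on $[0,t)$ together with an atom of mass $e^{-\lambda t}$ at $t+\alpha$. (Equivalently, one may use the construction from the proof of Theorem~\ref{the:marginal of exp timing process started at a}: conditioning on $Y_1\sim exp(\lambda)$, one has $Z_t^\alpha=Z_t$ on $\{Y_1\le t\}$ and $Z_t^\alpha=t+\alpha$ on $\{Y_1>t\}$.) Either way, both moments decompose as a Riemann--Stieltjes integral
\[
\mathbb{E}\big[(Z_t^\alpha)^k\big]=\int_0^t z^k\lambda e^{-\lambda z}\,dz+(t+\alpha)^k e^{-\lambda t},\qquad k=1,2.
\]

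First I would handle $k=1$: integration by parts gives $\int_0^t z\lambda e^{-\lambda z}\,dz=\tfrac1\lambda(1-e^{-\lambda t})-te^{-\lambda t}$, and upon adding $(t+\alpha)e^{-\lambda t}$ the $te^{-\lambda t}$ terms cancel, leaving exactly the claimed $\mathbb{E}(Z_t^\alpha)=\tfrac1\lambda(1-e^{-\lambda t})+\alpha e^{-\lambda t}$. Next I would compute $\mathbb{E}[(Z_t^\alpha)^2]$ the same way: a second integration by parts reduces $\int_0^t z^2\lambda e^{-\lambda z}\,dz$ to the first-moment integral already in hand, and adding $(t+\alpha)^2 e^{-\lambda t}$ and simplifying yields a closed form that is a combination of $1$, $e^{-\lambda t}$, $te^{-\lambda t}$, $\alpha$, and $\alpha^2$.

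Finally I would form $\mathrm{Var}(Z_t^\alpha)=\mathbb{E}[(Z_t^\alpha)^2]-\big(\mathbb{E}(Z_t^\alpha)\big)^2$, expand the square, and collect the result by powers of $e^{-\lambda t}$: the constant term collapses to $\tfrac1{\lambda^2}$, the $e^{-\lambda t}$ coefficient collects to $\alpha^2+2t\alpha-\tfrac{2\alpha}\lambda-\tfrac{2t}\lambda$, and the $e^{-2\lambda t}$ coefficient to $\tfrac{2\alpha}\lambda-\tfrac1{\lambda^2}-\alpha^2$; pulling out $\tfrac1{\lambda^2}$ reproduces the stated expression verbatim. There is no genuine conceptual obstacle — the argument is two integrations by parts followed by bookkeeping — so the only things to be careful about are (i) correctly accounting for the atom at $t+\alpha$ rather than treating $Z_t^\alpha$ as absolutely continuous, and (ii) regrouping the numerous $e^{-\lambda t}$ and $e^{-2\lambda t}$ terms without sign errors when matching the particular factored form in the statement.
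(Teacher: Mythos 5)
Your proposal is correct and takes essentially the same approach as the paper: you split $\mathbb{E}[(Z_t^\alpha)^k]$ into the absolutely continuous piece $\int_0^t z^k\lambda e^{-\lambda z}\,dz$ plus the atom $(t+\alpha)^k e^{-\lambda t}$, which is precisely the paper's conditioning on whether a reset has occurred by time $t$ (law of total expectation with $\{Z_t^\alpha = t+\alpha\}$ versus $\{Z_t^\alpha < t\}$). Your integration-by-parts and final regrouping by powers of $e^{-\lambda t}$ reproduce the stated formulas exactly, filling in the details the paper leaves as "the claimed expectation follows."
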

    \begin{proof}
        As mentioned in Remark \ref{rem:mixed2}, $(Z_{t}^{\alpha})_{t\geq0}$ are mixed random variables. Therefore, to compute their expectation, we may condition upon their values according to whether or not a reset has yet occurred and use the tools of discrete and continuous probability separately. Explicitly,
        for $t,\alpha\geq0$, by the law of total expectation, we have that
        \[
        \mathbb{E}(Z_{t}^{\alpha})=\mathbb{E}(Z_{t}^{\alpha}|Z_{t}^{\alpha}=t+\alpha)\mathbb{P}[Z_{t}^{\alpha}=t+\alpha]+\mathbb{E}(Z_{t}^{\alpha}|Z_{t}^{\alpha}<t)\mathbb{P}[Z_{t}^{\alpha}<t]
        \]
        where $Z_{t}^{\alpha}|Z_{t}^{\alpha}=t+\alpha$ is discrete and $Z_{t}^{\alpha}|Z_{t}^{\alpha}<t$ is continuous. The claimed expectation follows. We can similarly compute $\mathbb{E}((Z_{t}^{\alpha})^2)$ and deduce the variance using the two expectations.
    \end{proof}
    
\section{Application to the Modeling of Hair Growth}\label{sec:application to the modeling of hair growth}

    We begin this section by giving an overview of the hair growth cycle and use this to establish the assumptions we use to model a growing hair. 
    
    Moving on to modeling, we first consider a deterministic model utilizing differential equations and then move on to the stochastic model, which will be a specified resetting process. Finally, we model a haircut's impact on a growing hair and compute the marginal for our model of a hair undergoing regular haircuts.
    
    \subsection{Hair Growth Cycle and Modeling Assumptions}

    A single hair follicle cycles through 5 distinct phases in the hair growth cycle: anagen, catagen, telogen, exogen, and kenogen \cite[p.11-12]{blume2008hair}. Hair growth occurs primarily during the anagen stage, where new material is deposited within the hair shaft. Upon cessation of the anagen phase, the follicle undergoes the categen phase - a transition period between the active anagen phase and the inactive telogen phase. After the telogen phase, the hair is shed during the exogen phase, wherefore the follicle remains empty through the kenogen phase until the cycle begins anew with another anagen phase. Considered by their visual effect on hair length, these can be collected into three distinct stages of growth, rest, and absence, where hair length increases, remains at a peak, and returns to zero for a time.

    With this understanding of the hair growth cycle, we see that our initial hypothesis of constant local hair growth explained in our motivating example does not exactly align with reality. In fact, considering the cycle from the kenogen phase onwards, hair length remains at zero before beginning growth and plateaus to some constant length prior to being shed. We might choose to include each of the three stages in our model. However, the growing stage comprises approximately $80\%$ of the average 29 week cycle length \cite{saitoh1970human}. Therefore, we only consider the anagen phase (growth) and the exogen phase (shedding). See \cite{halloy2002follicular} for a continuous time discrete state space stochastic process which models the growth stages separately.
    
    Using optical microscopy and image analysis, Hayashi et al. \cite{hayashi1991measurement} observed an average scalp hair growth rate of approximately $9.5$ mm/month from adults with little to no balding. Because our goal is to predict hair length and variation over long durations, the time dependency of growth rate is of little consequence, so we assume a constant growth rate.

    Unusually, human hair - particularly scalp hair - does not undergo synchronized shedding \cite[p.42-43]{anastassakis2022androgenetic}. Therefore, we consider each hair independent of the rest. Moreover, while hair loss rate is affected by external factors such as the season \cite[p.43]{anastassakis2022androgenetic} and certainly bathing routine, because we will eventually consider a large number of hairs over many months, we assume that the distribution of shedding is identical for each successive hair grown from a single follicle. 
    
    In section \ref{sec:theory} we established a theoretical foundation for modeling stochastically resetting deterministic processes. As we explained in the introduction, the continued growth of a single hair follicle is a prime candidate to be modeled by such a process. In particular, we make the following modeling assumptions to apply our theory:\\

    We assume successive hairs growing out of a single follicle:
    \begin{enumerate}
        \item Fall out independently of other hairs,
        \item Grow at a constant rate $r>0$,
        \item Fall out with identical distribution, rate $\lambda>0$,
        \item Fall out at root only (do not break),
        \item Begin growing immediately after falling out,
        \item Fall out independently of hair length.
    \end{enumerate}

    Before we construct a stochastic model for this system, we construct a simple deterministic model to add context to the stochastic model.

    \subsection{Deterministic Hair Growth Model}\label{subsec:deterministic hair growth model}

    Here, we consider a simple deterministic model for the hair growth of an entire head of hair using differential equations. See \cite{al2012prototypic} for a more comprehensive (and scientifically grounded) deterministic model utilizing coupled differential equations.
    
    For $t\geq0$, let $X(t)$ be the average length of a head of hair at time $t\geq0$. Then, as individual hairs grow and fall out at a constant rate, we have
    \[
    X'(t)=r-\lambda X(t).
    \]
    Note that $\lambda$ being multiplied by $X(t)$ does not contradict assumption 6, but rather is due to the observation that when hairs do fall out, longer hairs will have a greater impact on the average hair length. 

    Imposing that the average hair length starts at some $X_0\geq0$, we then solve this equation through standard methods, obtaining
    \[
    X(t)=\frac{r}{\lambda}(1-e^{-\lambda t})+X_{0}e^{-\lambda t}.
    \]

    This model predicts the average length of a head of hair into the future. However, the visual appearance of a head of hair depends on more than just its average length. Crucially, the sample variance will impact the visual appearance. As an adjacent example, consider the visual differences between a mowed lawn and a natural field. Measuring the sample variance of a head of hair would be difficult and require some degree of discomfort. However, with a stochastic model, by the law of large numbers and the large number of hairs on a human head, the predicted mean and variance will give an accurate prediction of the sample mean and variance. With this goal in mind, we move on to a stochastic model.
    
    \subsection{Stochastic Hair Growth Model}\label{subsec:stochastic hair growth model}
    
    Returning to a stochastic viewpoint, let us now consider our assumptions' implications for our stochastic model. Assumption 1 allows us to model each hair follicle independently and assumptions 2 through 5 allow us to model each hair as a resetting process induced by the linear $r$-scaling function started from $0$.

    Because hair length is a linear function of the time since the last hair fell out (renewal time), assumption 6 also implies that hairs fall out independent of this renewal time. In fact this is exactly the memoryless property as stated in the proof of Theorem \ref{the:marginal of exp timing process started at a}. Exponential random variables are the only continuous random variable with the memoryless property, so assumptions 3 and 6 imply that our inter-renewal times will be $exp(\lambda)$ distributed.

    With these deductions, we define a class of resetting processes we use to model a growing hair.
    
    \begin{definition}[Growing Process]\label{def:growing process}\ \\
        Let $\lambda>0$ and $(Z_t)_{t\geq0}$ a timing process induced by inter-renewal time exp$(\lambda)$. Let $r>0$ be given and define the stochastic process $(G_t)_{t\geq0}$ by
        \[
        G_t=rZ_t.
        \]
        Call $(G_t)_{t\geq0}$ a \textbf{growing process} with \textbf{growth rate} $r$ and \textbf{renewal rate} $\lambda$.
    \end{definition}
    \begin{remark}\label{rem:growing remark}
        Because a growing process $(G_t)_{t\geq0}$ is a scaled timing process, it is a Markov process by injectivity of the scaling map and by Corollary \ref{cor:condition for markov}. Moreover, when we consider a growing process $(G_{t}^{\alpha})_{t\geq0}$ started at $\alpha\geq0$, we can easily compute its marginal law, expectation, and variance using Corollary \ref{cor:expectation and variance}, taking care to properly scale the starting point $\alpha$. In particular, the marginal is given by
        \[
        \mathbb{P}[G_{t}^{\alpha}\leq g]=
        \begin{cases} 
              0 & g\in(-\infty,0) \\
              1-e^{-\frac{\lambda g}{r}} & g\in[0,tr) \\
              1-e^{-\lambda t} & g\in[tr,tr+\alpha) \\
              1 & g\in[tr+\alpha,\infty) 
        \end{cases}.
        \]
        and the expectation by
        \[
        \mathbb{E}(G_{t}^{\alpha})=\frac{r}{\lambda}(1-e^{-\lambda t}) +\alpha e^{-\lambda t}.
        \]
        Note that the expectation of a growing process is a perfect analog to the average hair length predicted by the deterministic model.
    \end{remark}

    \subsection{Haircuts}\label{subsec:haircuts}

    Using the statements in section \ref{sec:theory}, we can now describe many characteristics of natural hair growth. In particular, considering Remark \ref{rem:mixed2} regarding the limiting behavior of a timing process, we can make strong statements regarding the limiting behavior of hair growth. However, people rarely let their hair grow for sufficient time to approximate the limiting behavior of hair growth, but rather, get haircuts before it reaches this point. To better model reality, we consider haircuts and their effect on a growing hair, introduce a hairucut operator which will act upon a growing process, and examine the marginal of the resulting process.

    Consider the following question: After an individual gets a haircut of length $\ell>0$, are all the hairs of length $\ell$? The answer is no because hairs that were initially shorter than $\ell$ were unaffected. This highlights the key haircut functionality of reducing a growing hair to the haircut length only when the hair's length is greater than the haircut length.

    We now define a haircut and let it act upon growing processes.
    
    \begin{definition}[Haircuts and haircut processes]\label{def:haircuts and haircut processes}\ \\
        For $\ell, s \geq0$, call the pair $(\ell, s )$ a \textbf{haircut} with \textbf{cuttoff length} $\ell$ and \textbf{haircut time} $ s $. Let any haircut $(\ell, s )$ act on a growing process $(G_t)_{t\geq0}$ by
        \[
        (\ell,  s ).(G_t)_{t\geq0}=((\ell,  s ).G_t)_{t\geq0}
        \]
        where
        \[
        (\ell, s ).G_t=
        \begin{cases} 
              G_t & t< s  \\
              G_{t- s }^{\min\{G_{ s },\ell\}} & t\geq  s  \\
        \end{cases}.
        \]
        By the (time homogeneous) Markov property and local linear growth of growing processes, this accurately models how a haircut acts upon a growing hair.
        We then let any sequence of haircuts $((\ell_n, s _n))_{n\in\mathbb{N}}$ act upon a growing process by inductively applying the above formulation. Call such a resultant process a \textbf{haircut process}.
    \end{definition}
    \begin{remark}\label{rem:sample path not preserved}
        Because this definition appends two growing processes together, the resulting sample path after time $ s $ will deviate from how the sample path of $(G_t)_{t\geq0}$ would look with a temporary reduction. We could have instead defined a haircut to be a random variable which reduced the growing process for a time under the right circumstances, preserving the sample path. However, the construction shown here produces an equivalent stochastic process due to the Markov property. Moreover, as we do not apply the definition too carefully, we use this more intuitive approach. The key observation is the haircut's functionality at time $ s $. Specifically, $(\ell, s ).G_ s =\min\{G_{ s },\ell\}$.
    \end{remark}

    \begin{figure}
        \centering
        \includegraphics[width=2.6666in]{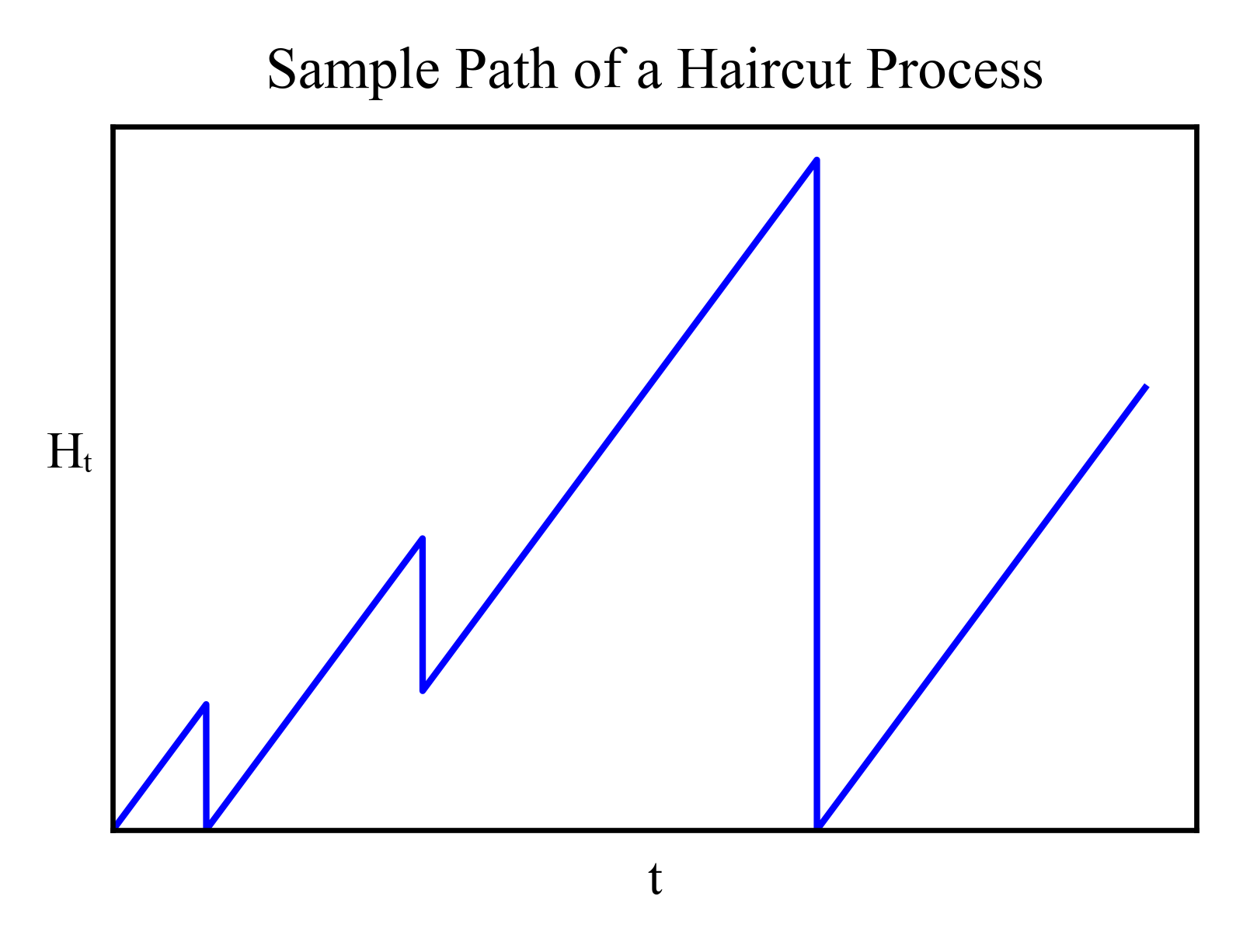}
        \caption{A sample path of a haircut process induced by a single haircut.}
        \label{fig:haircut process sample path}
    \end{figure}

    At this point in time, we have no reason to believe the marginal of a haircut process will remain tractable. Indeed, perhaps with each successive haircut the local marginal becomes more complicated, with computational complexity compounding. However, this remains to be seen. We now consider what occurs in a haircut process at haircut time - the first time the haircut process deviates from a growing process.

    \begin{proposition}[Law of haircut process at haircut time]\label{prop:cdf at haircut time}\ \\
        Let $(G_t)_{t\geq0}$ be a growing process with growth rate $r>0$ and renewal rate $\lambda>0$. Suppose $(\ell, s )$ is a haircut such that $\frac{\ell}{r}\leq  s $. Then we have that
        \[
        (\ell, s ).G_ s \sim G_{\frac{\ell}{r}}.
        \]
    \end{proposition}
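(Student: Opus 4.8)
The plan is to reduce the statement to a fact about the underlying timing process and then invoke the minimum characterization from Remark \ref{rem:minimum charactarization}. First, by the key observation recorded in Remark \ref{rem:sample path not preserved}, the haircut acts at its haircut time by $(\ell,s).G_s = \min\{G_s,\ell\}$, so it suffices to show $\min\{G_s,\ell\}\sim G_{\ell/r}$. Since $(G_t)_{t\geq0}$ is the timing process $(Z_t)_{t\geq0}$ scaled by $r$, pulling the factor of $r$ out of the minimum turns this into the claim $\min\{Z_s,\ell/r\}\sim Z_{\ell/r}$.

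From Remark \ref{rem:minimum charactarization} we have $Z_t\sim\min\{\exp(\lambda),t\}$ for every $t\geq0$. Applying this with $t=s$ gives $Z_s\sim\min\{\exp(\lambda),s\}$, hence $\min\{Z_s,\ell/r\}\sim\min\{\exp(\lambda),s,\ell/r\}$. Here the hypothesis $\ell/r\leq s$ is used: it lets us drop the $s$ from the triple minimum, leaving $\min\{\exp(\lambda),\ell/r\}$, which by Remark \ref{rem:minimum charactarization} (now with $t=\ell/r$) is exactly the law of $Z_{\ell/r}$. Scaling back up by $r$ yields $\min\{G_s,\ell\}\sim G_{\ell/r}$, as required.

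If one prefers to avoid the minimum characterization, the same thing can be done directly at the level of CDFs using Proposition \ref{prop:marginal of exp timing process}: write $\mathbb{P}[\min\{G_s,\ell\}\leq g]$ and split into the ranges $g<0$, $0\leq g<\ell$, and $g\geq\ell$. On the middle range it equals $\mathbb{P}[G_s\leq g]=\mathbb{P}[Z_s\leq g/r]$, and since $g<\ell\leq rs$ forces $g/r<s$, Proposition \ref{prop:marginal of exp timing process} evaluates this to $1-e^{-\lambda g/r}$; comparing with $\mathbb{P}[G_{\ell/r}\leq g]=\mathbb{P}[Z_{\ell/r}\leq g/r]$ computed from the same proposition shows the two CDFs agree on every range.

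There is no real obstacle here; the only point requiring care is the role of the hypothesis $\ell/r\leq s$. Geometrically it says the haircut occurs late enough that, were the growing process never reset, it would already have reached the cutoff length $\ell$; equivalently, it places $g$ in the continuous part of the CDF of $G_s$ for every $g<\ell$, so the point mass of $G_s$ at $rs$ (coming from $Z_s=s$ when the first renewal has not yet occurred, cf.\ Remark \ref{rem:mixed rv}) is absorbed by the truncation at $\ell$ and never appears in $\min\{G_s,\ell\}$. Without this hypothesis the two laws genuinely differ on $[rs,\ell)$.
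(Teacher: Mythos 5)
Your main argument is precisely the paper's proof: reduce $\min\{G_s,\ell\}$ to $\min\{Z_s,\ell/r\}$ by pulling out the factor of $r$, invoke the characterization $Z_t\sim\min\{\exp(\lambda),t\}$ from Remark \ref{rem:minimum charactarization} twice, and use $\ell/r\leq s$ to collapse the triple minimum. The direct CDF verification you sketch as an alternative, and your geometric reading of the hypothesis (the atom of $G_s$ at $rs$ being absorbed by the truncation at $\ell$), are nice supplements but the core reasoning is identical to the paper's.
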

    \begin{proof}
        From the definition of a haircut process, we have that
        \[
        (\ell, s ).G_ s =\min\{G_{ s },\ell\}.
        \]
        Call this random variable $M$.
        Consider that by the definition of a growing process, $G_t=rZ_t$. Thus
        \[
        \frac{1}{r}M=\min\{\frac{1}{r}G_{ s },\frac{\ell}{r}\}=\min\{Z_{ s },\frac{\ell}{r}\}
        \]
        Then, by Remark \ref{rem:minimum charactarization}, we have the alternate characterization of $Z_ s  \sim min\{exp(\lambda), s \}$. Hence,
        \[
        \frac{1}{r}M \sim \min\{exp(\lambda), s ,\frac{\ell}{r}\}.
        \]
        By assumption that $\frac{\ell}{r}\leq  s $, we see that
        \[
        \frac{1}{r}M \sim \min\{exp(\lambda),\frac{\ell}{r}\},
        \]
        which, again by Remark \ref{rem:minimum charactarization}, we understand to be distributed as $Z_\frac{\ell}{r}$. Recalling the definition of a growing process, multiplying both sides by $r$ gives the required result.
    \end{proof}
    \begin{remark}\label{rem:potent haircut}
        Returning to the definition of a timing process, as noted in the beginning of the proof of Theorem \ref{the:haircut process piecwise marginal}, at time $t\geq0$, a timing process takes values in the interval $[0,t]$. Therefore, by the definition of a growing process, for $ s \geq0$, we must have $0\leq G_ s  \leq r s $. By assuming that $\ell\leq r s $, we impose that the haircut could possibly impact the growing process.

        More contextually, a barber would be awfully confused should you ask for a haircut beyond your current hair length.

        If we assume instead that $\ell> r s $ then trivially $(\ell, s ).G_ s \sim G_{ s }$ because the growing process could never be greater than $\ell$ at time $ s $.
    \end{remark}

    This symmetry is quite surprising, arising as a result of the coincidental compatibility of the two minimums in both the alternative characterization of a timing process explained in Remark \ref{rem:minimum charactarization} and the haircut functionality explained in Remark \ref{rem:sample path not preserved}.
    
    Let us examine the implications of this result. Observe that the computed distribution $G_{\frac{\ell}{r}}$ is independent of $ s $. This means that under our model, so long as the haircut could possibly have cut a given hair, the distribution of hair length at haircut time is fixed.

    This alone is interesting, but using this proposition, we can make a stronger statement. With the resultant distribution in mind, return now to the definition of a haircut process, and consider what we can say about the haircut process's marginal law after the single haircut occurs. In particular, consider
    \[
    (G_{t}^{\text{Min}\{G_{ s },\ell\}})_{t\geq0} \sim (G_{t}^{G_{\frac{\ell}{r}}})_{t\geq0}.
    \]
    Following on from Definition \ref{def:timing starting at alpha}, for $\alpha\geq0$, $(G_{t}^{\alpha})_{t\geq0}$ is a growing process started at $\alpha$. Therefore, we have that $(G_{t}^{G_{\frac{\ell}{r}}})_{t\geq0}$ is a growing process started at $G_{\frac{\ell}{r}}$. In other words, the continued behavior of a growing process first observed at time $\frac{\ell}{r}$. Hence,
    \[
    (G_{t}^{\text{Min}\{G_{ s },\ell\}})_{t\geq0} \sim (G_{\frac{\ell}{r}+t})_{t\geq0}.
    \]
    While this statement is describing an equivalence of stochastic processes, it is important to note that one cannot sample two growing processes and append them together to construct a haircut process as that would ignore the dependence of the random variables in a haircut process on the boundary of the haircut time. However, the above observation is sufficient to describe the entire marginal of $(\ell,  s ).(G_t)_{t\geq0}$.

    Note that we have only been considering a haircut process induced by a single haircut. However, because the law of a haircut process after the first haircut is identical to the tail of another growing process, the argument shown above can be applied inductively to any sequence of successive haircuts, allowing us to describe the piecewise law of an arbitrary haircut process. In particular, we can describe an arbitrary haircut process's marginal.

    Intuitively, one might expect a haircut to impose some unnatural order on the hair. However, Proposition \ref{prop:cdf at haircut time} and the arguments above tell us that cutting a hair can only move a hair's growth to a previous point in its natural growth cycle. Any imposed order would be between separate hairs rather than upon a single hair.
    
    For use in Section \ref{sec:quantifying unkempt hair}, we now describe the case of regular haircuts with constant cuttoff length.

    \begin{theorem}[Haircut process piecewise law]
    \label{the:haircut process piecwise marginal}\ \\
        Let $(G_t)_{t\geq0}$ be a growing process with growth rate $r>0$. Suppose the sequence of consecutive haircuts $(\ell, s +ni)_{n\in\mathbb{N}_0}$, with $r s \geq\ell$ and period $i>0$, acts on the growing process to induce the haircut process $(H_t)_{t\geq0}$. Then for $n\in\mathbb{N}$ the local law between the $n$th and $(n+1)$th haircuts is given by
        \[
        (H_t)_{t\in[ s +ni, s +(n+1)i)}\sim (G_t)_{t\in[\frac{\ell}{r},\frac{\ell}{r}+i)}.
        \]
    \end{theorem}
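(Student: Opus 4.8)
I would prove the statement by induction on $n$, iterating the reduction already carried out in the discussion preceding the theorem. Write $(H^{(m)}_t)_{t\geq0}$ for the process obtained by letting only the first $m$ haircuts $(\ell,s),(\ell,s+i),\dots,(\ell,s+(m-1)i)$ act on $(G_t)_{t\geq0}$. Since the value of a haircut process at a time $t$ is unaffected by haircuts scheduled strictly after $t$, we have $H_t=H^{(n+1)}_t$ for all $t<s+(n+1)i$, and in particular for all $t\in[s+ni,s+(n+1)i)$; hence it suffices to analyse $H^{(n+1)}$ on this window. The inductive claim I would establish is: for every $n\in\mathbb{N}_0$,
\[
(H^{(n+1)}_t)_{t\geq s+ni}\ \sim\ \bigl(G_{\frac{\ell}{r}+(t-s-ni)}\bigr)_{t\geq s+ni},
\]
i.e.\ from its most recent haircut onward, $H^{(n+1)}$ is distributed as a growing process observed from time $\ell/r$. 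Granting this, the theorem follows by restricting to $t\in[s+ni,s+(n+1)i)$ and reindexing time via $u=\ell/r+(t-s-ni)$, which runs over $[\ell/r,\ell/r+i)$.

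The base case $n=0$ is exactly the chain of distributional identities already assembled in the prose. By Definition~\ref{def:haircuts and haircut processes}, $(H^{(1)}_t)_{t\geq s}=(G^{\min\{G_s,\ell\}}_{t-s})_{t\geq s}$; the hypothesis $rs\geq\ell$ lets Proposition~\ref{prop:cdf at haircut time} apply at the first haircut, giving $\min\{G_s,\ell\}=(\ell,s).G_s\sim G_{\ell/r}$, and the accompanying argument upgrades this to the process-level identity $(G^{\min\{G_s,\ell\}}_t)_{t\geq0}\sim(G_{\ell/r+t})_{t\geq0}$, which is the claim after shifting time by $s$. (The same computation shows the theorem's conclusion in fact also holds at $n=0$.)

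For the inductive step, assume the claim for $n$ and apply the $(n+1)$st haircut $(\ell,s+(n+1)i)$ to $H^{(n+1)}$. By Definition~\ref{def:haircuts and haircut processes}, $H^{(n+2)}$ agrees with $H^{(n+1)}$ before time $s+(n+1)i$ and equals $\bigl(G^{\,\min\{H^{(n+1)}_{s+(n+1)i},\ell\}}_{t-s-(n+1)i}\bigr)_{t\geq s+(n+1)i}$ afterward. By the inductive hypothesis, on $[s+ni,\infty)$ the process $H^{(n+1)}$ is distributed as a growing process run from time $\ell/r$, and the haircut time $s+(n+1)i$ sits at internal time $\ell/r+i$ within that process; since $i>0$ we have $\ell/r+i\geq\ell/r$, so Proposition~\ref{prop:cdf at haircut time} and its follow-up argument apply verbatim — using the time-homogeneous Markov property of growing processes (Remark~\ref{rem:growing remark}) to legitimately restart the analysis at this haircut. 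This yields $\min\{H^{(n+1)}_{s+(n+1)i},\ell\}\sim G_{\ell/r}$ together with $(H^{(n+2)}_t)_{t\geq s+(n+1)i}\sim(G_{\ell/r+(t-s-(n+1)i)})_{t\geq s+(n+1)i}$, which is the claim for $n+1$.

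The only genuine obstacle is the inductive step, and within it the need to justify that the prose reduction may be re-run at each subsequent haircut: one must invoke the Markov property to treat $H^{(n+1)}$, from time $s+ni$ onward, as a bona fide growing process for the purpose of applying Proposition~\ref{prop:cdf at haircut time}; one must check that every later haircut lands at internal time $\geq\ell/r$ in the relevant growing process (automatic from $i>0$, once the first haircut is handled via $rs\geq\ell$); and one must resist — as the prose explicitly warns — literally concatenating independent growing processes, keeping instead to the construction of Definition~\ref{def:haircuts and haircut processes} and extracting only distributional statements.
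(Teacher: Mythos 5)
Your proof is correct and takes the same approach the paper itself sketches: the theorem is stated without a formal proof, the argument being delegated to the preceding prose, which establishes the single-haircut case via Proposition~\ref{prop:cdf at haircut time} and then remarks that it ``can be applied inductively to any sequence of successive haircuts.'' You have simply made that induction explicit, with the right invariant (from each haircut onward the process is distributed as a growing process restarted at internal time $\ell/r$) and the right cautions (invoking the Markov property of Remark~\ref{rem:growing remark} to re-enter the single-haircut analysis, verifying $\ell/r+i\geq\ell/r$, and working distributionally rather than pathwise as warned in Remark~\ref{rem:sample path not preserved}).
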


\section{Computing Your Ideal Haircut}\label{sec:quantifying unkempt hair}

\subsection{Quantifying Unkempt Hair}
    Now, having developed a model for a growing hair receiving regular haircuts, we consider quantifying the appearance of a head of hair. This section seeks to answer the question: what is my ideal haircut routine? Given the subjective nature of the question, the application of the mathematical results in this section is similarly subjective. We include this section so the reader may conclude with a novel exercise, answering for themselves what their ideal haircut routine is.

    First, we must establish a way of quantifying the appearance of an individual's head of hair.
    
    \begin{definition}[Rattiness]\label{def:rattiness}\ \\
        Let $K$ be a random variable defined on probability space $(\Omega,\mathcal{F},\mathbb{P})$. Assume $K$ has existing non zero expectation and finite variance. Then, define the \textbf{rattiness} of K to be
        \[
        R(K)=\frac{\sqrt{\text{VAR}(K)}}{\mathbb{E}(K)}.
        \]
    \end{definition}
    \begin{remark}\label{rem:coefficient of variation}
        This formulation is classically called the coefficient of variation of $K$.
    \end{remark}

    The thinking here is that the sample variation in an individual's hair lengths influences the appearance. A constant variation would be more impactfull with a shorter mean. Hence dividing by the expectation. Then, to put the variance on the same scale as the expectation, we take its square root.

    It is important to note that the goal of this function is not to declare a best hair appearance, but to provide a standardized scale on which an individual can describe their preference.

    While the rattiness of a random variable is purely a function of expectations, due to the law of large numbers and the large number of hairs on the average human head, the sample rattiness will be nearly identical to the expected rattiness.

    We first compute the rattiness of the random variables in a growing process.
    
    \begin{proposition}[Rattiness of Growing Process]\label{prop:rattiness of growing process}\ \\
        Let $(G_t)_{t\geq0}$ be a growing process with growth rate $r>0$ and renewal rate $\lambda>0$. Then for $t>0$,
        \[
        R(G_t)=g(\lambda t)
        \]
        where $g:[0,\infty)\xrightarrow{}[0,1)$, defined by
        \[
        g(x)=
        \begin{cases} 
                  0 & x=0 \\
                  \frac{\sqrt{1-2xe^{-x}-e^{-2x}}}{1-e^{-x}} & x>0
        \end{cases},
        \]
        is a strictly increasing, continuous bijection, with $\lim_{x\to\infty} g(x)=1$.
    \end{proposition}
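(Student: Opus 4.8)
The plan is to split the proof into a mechanical part (pinning down the closed form of $R(G_t)$) and an analytic part (the claimed properties of $g$), with essentially all the difficulty in the second.

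\emph{Step 1: the closed form.} A growing process is $G_t = rZ_t$ with $(Z_t)_{t\geq0}$ the timing process induced by inter-renewal time $\exp(\lambda)$, and since $Z_0=0$ the hitting time $\tau_0$ vanishes, so $(Z_t)_{t\geq0}$ is the timing process started at $\alpha=0$. Applying Corollary \ref{cor:expectation and variance} with $\alpha=0$ (and Remark \ref{rem:growing remark} for the $r$-scaling) gives $\mathbb{E}(G_t)=\frac{r}{\lambda}(1-e^{-\lambda t})$ and $\mathrm{Var}(G_t)=r^2\mathrm{Var}(Z_t)=\frac{r^2}{\lambda^2}\bigl(1-2\lambda t\,e^{-\lambda t}-e^{-2\lambda t}\bigr)$. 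For $t>0$ the expectation is nonzero and the variance finite, so $R(G_t)$ is defined; the common factor $r/\lambda$ cancels, leaving $R(G_t)=\frac{\sqrt{1-2xe^{-x}-e^{-2x}}}{1-e^{-x}}$ with $x=\lambda t>0$, which is precisely $g(\lambda t)$.

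\emph{Step 2: elementary properties of $g$.} Write $h(x)=1-2xe^{-x}-e^{-2x}$, so $g=\sqrt{h}/(1-e^{-x})$ on $(0,\infty)$. To see $h>0$ there (hence $g$ well defined and strictly positive), note $h(0)=0$ and $h'(x)=2e^{-x}(x-1+e^{-x})$, and the bracket $p(x):=x-1+e^{-x}$ satisfies $p(0)=0$, $p'(x)=1-e^{-x}\geq0$, so $p\geq0$ and $h'\geq0$ with strict inequality for $x>0$. Continuity on $(0,\infty)$ is clear; at the origin a Taylor expansion gives $h(x)=\tfrac{1}{3}x^3+O(x^4)$ while $1-e^{-x}=x+O(x^2)$, so $g(x)=\sqrt{x/3}+o(1)\to0$, matching $g(0)=0$. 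Finally $h(x)\to1$ and $1-e^{-x}\to1$ as $x\to\infty$, so $g(x)\to1$.

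\emph{Step 3: monotonicity, the crux.} Since $g$ is a quotient of increasing functions its monotonicity is not automatic; instead I would show $g^2=h/(1-e^{-x})^2$ is strictly increasing on $(0,\infty)$. Differentiating, $\bigl(g^2\bigr)'=\bigl(h'(1-e^{-x})-2h\,e^{-x}\bigr)/(1-e^{-x})^3$, and the numerator expands and collapses to $2e^{-x}\phi(x)$ where $\phi(x)=x(1+e^{-x})-2(1-e^{-x})$. Now $\phi(0)=0$, $\phi'(x)=1-e^{-x}-xe^{-x}$ with $\phi'(0)=0$, and $\phi''(x)=xe^{-x}>0$ for $x>0$; integrating twice gives $\phi>0$ on $(0,\infty)$, hence $(g^2)'>0$ and $g^2$, and therefore $g$, is strictly increasing. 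Since $g^2$ increases to $1$ it stays below $1$, so $g<1$ on $[0,\infty)$. Strict monotonicity, continuity, $g(0)=0$, and $\sup g=1$ (unattained) together make $g$ a continuous strictly increasing bijection $[0,\infty)\to[0,1)$, completing the proof. The main obstacle is executing the cancellation in $h'(1-e^{-x})-2h\,e^{-x}$ cleanly enough to surface the tractable function $\phi$; everything after that is routine monotone-calculus bookkeeping.
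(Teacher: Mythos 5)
Your Step 1 reproduces the paper's entire proof: invoke the expectation and variance from Corollary \ref{cor:expectation and variance} with $\alpha=0$, observe that the $r/\lambda$ scaling cancels in the ratio, and read off $R(G_t)=g(\lambda t)$. The paper then explicitly says ``We leave it to the reader to verify the properties of $g$,'' so your Steps 2 and 3 are supplying what the paper omits rather than diverging from it. The extra work is correct: the radicand $h(x)=1-2xe^{-x}-e^{-2x}$ has $h'(x)=2e^{-x}(x-1+e^{-x})\geq0$ with equality only at $0$ (so $h>0$ on $(0,\infty)$ and $g$ is well defined and positive), the Taylor expansion $h(x)=\tfrac13 x^3+O(x^4)$ gives continuity at the origin, and the algebraic collapse
\[
h'(x)\bigl(1-e^{-x}\bigr)-2h(x)e^{-x}=2e^{-x}\bigl(x(1+e^{-x})-2(1-e^{-x})\bigr)=2e^{-x}\phi(x)
\]
does check out, with $\phi(0)=\phi'(0)=0$ and $\phi''(x)=xe^{-x}>0$ for $x>0$, hence $\phi>0$ and $(g^2)'>0$ on $(0,\infty)$. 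Passing from $g^2$ to $g$ is legitimate since $g\geq0$, and the limit $g\to1$ together with monotonicity gives the bijection onto $[0,1)$. In short: same route as the paper for the computation, plus a complete verification of the asserted analytic properties that the paper deliberately skips — a worthwhile addition, since monotonicity of a ratio of two increasing functions genuinely needs the $\phi$-argument and is not ``clear.''
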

    \begin{proof}
        Note that the rattiness of a random variable is invariant under (non-zero) scaling. Therefore, as growing processes are scaled timing processes, we need only compute the rattiness of a timing process $(Z_t)_{t\geq0}$.

        Let $t>0$ be given. Then, by substituting $\alpha=0$ into the expectation and variance computed in the Corollary \ref{cor:expectation and variance}, we obtain
        \[
        \mathbb{E}(Z_{t})=\frac{1}{\lambda}(1-e^{-\lambda t})
        \]
        \[
        Var(Z_{t})=\frac{1}{\lambda^2}(1-2t\lambda e^{-t\lambda}-e^{-2t\lambda})
        \]
        both finite and non-zero. Therefore, by the definition of rattiness, we compute
        \[
        R(G_t)=R(Z_t)=\frac{\sqrt{1-2\lambda te^{-\lambda t}-e^{-2\lambda t}}}{1-e^{-\lambda t}}=g(\lambda t).
        \]
        We leave it to the reader to verify the properties of g.
    \end{proof}

    In section \ref{sec:application to the modeling of hair growth} and in particular Theorem \ref{the:haircut process piecwise marginal}, we computed the marginal of haircut processes, finding them to be repeated sections of a growing process's marginal. Using these observations and the rattiness of a growing process, we find that the rattiness of any haircut process is simply repeated sections of the rattiness of a growing process. Since the rattiness of a growing process is independent of growth rate $r>0$ and maps nicely onto $(0,1)$, we take this to be our scale on which to describe the appearance of a head of hair.

    For use in the final subsection, we explicitly compute the rattiness of a haircut process receiving regular haircuts.
    
    \begin{theorem}[Rattiness of Haircut Process]
    \label{the:rattiness of haircut process}\ \\
        Let $(H_t)_{t\geq0}$ be the haircut process constructed in Theorem \ref{the:haircut process piecwise marginal}. Then, for $t\geq0$, the rattiness of $H_t$ is given by
        \[
        R(H_t)=
        \begin{cases} 
                  g(\lambda t) & t\in[0, s ) \\
                  g(\lambda(\frac{\ell}{r}+(t-( s +ni)))) & t\in[ s +ni, s +(n+1)i) \text{  for some $n\in\mathbb{N}$}.
        \end{cases}
        \]
    \end{theorem}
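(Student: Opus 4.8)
The plan is to reduce the claim entirely to Proposition~\ref{prop:rattiness of growing process} together with Theorem~\ref{the:haircut process piecwise marginal}. The key elementary observation is that rattiness is a functional of the law of a random variable alone: by Definition~\ref{def:rattiness}, $R(K)=\sqrt{\mathrm{VAR}(K)}/\mathbb{E}(K)$ depends on nothing but $\mathbb{E}(K)$ and $\mathrm{VAR}(K)$, hence only on the distribution of $K$. Consequently, whenever $H_t\sim G_u$ for some $u>0$, we may read off $R(H_t)=R(G_u)=g(\lambda u)$ directly from Proposition~\ref{prop:rattiness of growing process}. So the entire proof amounts to identifying, for each $t$, the parameter $u$ with $H_t\sim G_u$.

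First I would treat $t\in[0,s)$. Before the first haircut (which occurs at time $s$, i.e.\ $n=0$ in the sequence $(\ell,s+ni)_{n\in\mathbb{N}_0}$), Definition~\ref{def:haircuts and haircut processes} gives $H_t=G_t$, so in particular $H_t\sim G_t$, and therefore $R(H_t)=R(G_t)=g(\lambda t)$ by Proposition~\ref{prop:rattiness of growing process}. This settles the first case.

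Next, fix $n\in\mathbb{N}$ and $t\in[s+ni,s+(n+1)i)$. Theorem~\ref{the:haircut process piecwise marginal} asserts the equivalence of stochastic processes $(H_t)_{t\in[s+ni,s+(n+1)i)}\sim(G_t)_{t\in[\frac{\ell}{r},\frac{\ell}{r}+i)}$, where the identification of the two length-$i$ time intervals is the order-preserving affine bijection $t\mapsto\frac{\ell}{r}+(t-(s+ni))$. Passing to one-dimensional marginals yields $H_t\sim G_{\frac{\ell}{r}+(t-(s+ni))}$, and since $\frac{\ell}{r}+(t-(s+ni))>0$, Proposition~\ref{prop:rattiness of growing process} gives $R(H_t)=g(\lambda(\frac{\ell}{r}+(t-(s+ni))))$, which is the second case. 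The window $[s,s+i)$ immediately after the first haircut is handled in exactly the same way with $n=0$, invoking the observation $(G_t^{\min\{G_s,\ell\}})_{t\geq0}\sim(G_{\frac{\ell}{r}+t})_{t\geq0}$ recorded after Proposition~\ref{prop:cdf at haircut time}.

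Once Theorem~\ref{the:haircut process piecwise marginal} is in hand the computation is essentially free, so the only thing requiring care is the time-bookkeeping: one must apply the process equivalence with the correct order-preserving reparametrisation, so that absolute time $t$ in the $(n+1)$th inter-haircut window corresponds to time $\frac{\ell}{r}+(t-(s+ni))$ of the growing process rather than, say, $t-(s+ni)$. I would also note, for honesty's sake, that at $t=0$ the quantity $R(H_0)=R(G_0)$ is not defined (since $\mathbb{E}(G_0)=0$), so the statement is to be read for $t>0$ — the same harmless convention already in force in Proposition~\ref{prop:rattiness of growing process}.
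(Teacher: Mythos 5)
The paper states Theorem~\ref{the:rattiness of haircut process} without a proof, evidently because it is an immediate corollary of Theorem~\ref{the:haircut process piecwise marginal} and Proposition~\ref{prop:rattiness of growing process}; your proposal carries out exactly that reduction correctly (rattiness is a law functional, match one-dimensional marginals via the order-preserving affine time change, invoke Proposition~\ref{prop:rattiness of growing process}). Your side remarks are also well taken: the theorem's second case as printed uses $n\in\mathbb{N}$ and so nominally omits the window $[s,s+i)$ (your $n=0$ argument, invoking $(G_t^{\min\{G_s,\ell\}})_{t\geq0}\sim(G_{\frac{\ell}{r}+t})_{t\geq0}$, fills this), and $R(H_0)=R(G_0)$ is undefined since $\mathbb{E}(G_0)=0$, so ``for $t\geq0$'' should really be $t>0$, consistent with Proposition~\ref{prop:rattiness of growing process}.
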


\subsection{Your Ideal Haircut}
    
    Finally, equipped with a standardized 0-1 scale on which to quantify hair unkemptness, we prescribe a method for determining an individual's ideal haircut routine.

    Suppose an individual has hair growth rate $r>0$ and loses hair at a rate of $\lambda>0$. Then, suppose they are comfortable with hair rattiness in the interval $[a,b]$ where $a,b\in[0,1]$ with $a<b$. We compute optimal the cuttoff length $\ell\geq0$ and interval $i>0$ between haircuts.

    Assuming an ideal haircut routine is frugal, we wish to maximize the time spent in this region of comfort to minimize the number of haircuts. Then, because $g$ is strictly increasing and continuous, we impose that the rattiness at haircut time should be $a$ and rattiness immediately before haircut time should be $b$. Deferring to Theorem \ref{the:rattiness of haircut process}, we have that the rattiness at haircut time is $g(\lambda\frac{\ell}{r})$ and rattiness immediately before haircut time is $g(\lambda(\frac{\ell}{r}+i))$. Because g is a bijection, we can solve
    \[
    g(\lambda\frac{\ell}{r})=a \implies \ell=\frac{r}{\lambda}g^{-1}(a)
    \]
    \[
    g(\lambda(\frac{\ell}{r}+i))=b \implies i=\frac{1}{\lambda}g^{-1}(b) - \frac{\ell}{r}=\frac{1}{\lambda}(g^{-1}(b)-g^{-1}(a)).
    \]
    Recalling that the hair cycle lasts an estimated $29$ weeks \cite{saitoh1970human}, we compute $\lambda \approx 0.15$ /month. Moreover, an estimated average growth rate is $r \approx 9.5$ mm/month \cite{hayashi1991measurement}. With both estimates given for scalp hair in an adult with little to no balding, the ideal haircut length and interval can now be computed.

    As one might expect, the ideal haircut length $\ell$ is directly proportional to growth rate $r$. Perhaps more interesting, the ideal haircut interval $i$ is independent of growth rate $r$ and inversely proportional to loss rate $\lambda$, meaning that losing more hair is better for ones wallet, while growing more hair is inconsequential. The bald and balding are the real winners it seems.

    The astute reader may have noticed that, implicit in the use of the law of large numbers, this formulation models each hair as identically distributed and in particular being acted upon by haircuts of constant cuttoff length. In reality, hair growth rate will vary between follicles and follicle location. Moreover, as the head is dome shaped, a single physical haircut requires different cuttoff lengths for each haircut acting upon the affected follicle, as the hairs hang from the root. Hence, the model cannot describe style or hairdo. However, we figure that an individual willing to let a calculation decide their haircut is not looking for fashion advice in a mathematical journal.

\section{Other Modeling Exercises}\label{sec:other exercises}
    We leave the reader with a few exercises that can be modeled using the theory in this article:

    \begin{enumerate}
        \item \textbf{Galactic Conquest.}\\
        Suppose there exists an alien race which is determined to dominate the galaxy, and that during times of peace, the aliens reproduce exponentially with time. In their quest, they engage in galactic wars according to the arrivals of a Poisson process. But, as the aliens are poor fighters, with each war, they are forced back to their home planet, where their population is reduced to a set amount. What degree of pacifism is required to take over the galaxy? (For what parameters of the Poisson process does the expected population diverge to infinity?)
        
        \item \textbf{The Candy Jar.}\\
        A receptionist offers candy from a jar on their desk. They refill the jar to the top according to the arrivals of a Poisson process. Suppose that, when the jar is non-empty, candy is taken at a constant rate. Assuming the receptionist has done this for a long while, for what proportion of time is the jar empty? Suppose the employees are rude, and complain as soon as the jar is empty. After the receptionist refills the jar, how long can they expect to wait before receiving a complaint?

        \item \textbf{The Automatic Door}.\\
        An external entrance is equipped with an automatic door and people walk through it according the the arrivals of a Poisson process. When a person approaches the door it quickly opens if closed, or remains open if already open. If no person walks through in a fixed amount of time, the door closes quickly. When the door is open, the building incurs a fixed heating cost per unit time, and the door incurs a fixed electricity and maintenance cost each time it opens and closes. For what combinations of parameters is it cheaper to leave the door open than to use the automatic system?
    \end{enumerate}

    For the second and third questions, you may need to deduce a mean stopping time. This can be done using the infinitesimal generator of the process described in Theorem \ref{the:marginal of exp timing process started at a}, which is given by
    \[
    \mathbb{L}_{Z}f(x)=\lim_{t\downarrow0}(\frac{1}{t}(\mathbb{E}(f(Z_t^x))-f(x)))=f'(x)-\lambda(f(x)-f(0))
    \]
    for any differentiable $f:\mathbb{R}\to\mathbb{R}$ and $x\geq0$. This can be deduced by conditioning upon whether or not the process is reset before time $t$. Alternatively, having completed the first exercise, you may be able to spot a relevant Martingale.
    
\bibliographystyle{vancouver}
\bibliography{Sources}
    
\end{document}